\newtheorem{thm}{Theorem}
\newtheorem{prp}[thm]{Proposition}
\newtheorem{cor}[thm]{Corollary}
\newtheorem{lem}[thm]{Lemma}
\newcommand{\sloc}{\mathrm{sloc}}
\newcommand{\gen}{\mathrm{gen}}
\newcommand{\cutoff}{\mathcal{C}}
\newcommand{\thr}{\varsigma}
\DeclareMathOperator{\srdet}{det^{1/2}}
\mathchardef\negative="2200
\DeclareMathOperator{\isrdet}{det^{\negative 1/2}}
\DeclareMathOperator{\Kern}{\mathcal{K}}
\DeclareMathOperator{\supp}{\mathrm{supp}}
\newcommand{\opL}{\mathcal{L}}
\newcommand{\BB}{\mathcal{B}}
\newcommand{\N}{\mathbb{N}}
\newcommand{\Z}{\mathbb{Z}}
\newcommand{\R}{\mathbb{R}}
\newcommand{\C}{\mathbb{C}}
\newcommand{\tc}{\,:\,}
\newcommand{\hS}{\mathrm{S}}
\newcommand{\hT}{\mathrm{T}}
\newcommand{\psign}{\varepsilon}
\newcommand{\chr}{\chi}
\newcommand{\vecU}{\mathbf{U}}
\newcommand{\id}{\mathrm{id}}
\newcommand{\lie}{\mathfrak}
\newcommand{\fst}{{\lie{g}_1}}
\newcommand{\snd}{{\lie{g}_2}}
\newcommand{\ddsnd}{\lie{g}_{2,\mathrm{r}}^*}
\newcommand{\dusnd}{{\lie{g}_2^*}}
\newcommand{\cfst}{{(\lie{g}_{1})_\C}}
\newcommand{\defeq}{\mathrel{:=}}
\title[Spectral multipliers on $2$-step groups]{Spectral multipliers on $2$-step groups: topological versus homogeneous dimension}
\author[A. Martini]{Alessio Martini}
\address[A. Martini]{School of Mathematics \\ University of Birmingham \\ Edgbaston \\ Birmingham \\ B17 0AN \\ United Kingdom}
\email{a.martini@bham.ac.uk}
\author[D. M\"uller]{Detlef M\"uller}
\address[D. M\"uller]{Mathematisches Seminar \\ C.-A.-Universit\"at zu Kiel \\ Ludewig-Meyn-Str.\ 4 \\ D-24118 Kiel \\ Germany}
\email{mueller@math.uni-kiel.de}
\subjclass[2010]{43A22, 42B15}
\keywords{nilpotent Lie groups, spectral multipliers, sub-Laplacians, Mihlin-H\"ormander multipliers, singular integral operators}
\begin{document}
\begin{abstract}
Let $G$ be a $2$-step stratified group of topological dimension $d$ and homogeneous dimension $Q$. Let $\opL$ be a homogeneous sub-Laplacian on $G$. By a theorem due to Christ and to Mauceri and Meda, an operator of the form $F(\opL)$ is of weak type $(1,1)$ and bounded on $L^p(G)$ for all $p \in (1,\infty)$ whenever the multiplier $F$ satisfies a scale-invariant smoothness condition of order $s > Q/2$. It is known that, for several $2$-step groups and sub-Laplacians, the threshold $Q/2$ in the smoothness condition is not sharp and in many cases it is possible to push it down to $d/2$. Here we show that, for all $2$-step groups and sub-Laplacians, the sharp threshold is strictly less than $Q/2$, but not less than $d/2$.
\end{abstract}

\maketitle

\section{Introduction}

Let $\opL = -\Delta$ be the Laplace operator on $\R^d$. 
Since $\opL$ is essentially self-adjoint on $L^2(G)$, a functional calculus for $\opL$ is defined via the spectral theorem and an operator of the form $F(\opL)$ is bounded on $L^2(\R^d)$ whenever the Borel function $F : \R \to \C$ is bounded. The investigation of necessary and sufficient conditions for $F(\opL)$ to be bounded on $L^p$ for some $p \neq 2$ in terms of properties of the ``spectral multiplier'' $F$ is a traditional and very active area of research of harmonic analysis.

Among the classical results, a corollary of the Mihlin--H\"ormander multiplier theorem gives a sufficient condition for the $L^p$-boundedness of $F(\opL)$ in terms of a local scale-invariant Sobolev condition of the form 
\begin{equation}\label{eq:mh}
\| F \|_{L^q_{s,\sloc}} \defeq \sup_{t \geq 0} \| F(t \cdot) \, \eta \|_{L^q_s}< \infty
\end{equation}
for appropriate $q \in [1,\infty]$, $s \in [0,\infty)$; here $\eta \in C^\infty_c((0,\infty))$ is any nonzero cutoff and $L^q_s$ is the $L^q$ Sobolev space of order $s$.

\begin{thm}[Mihlin--H\"ormander]\label{thm:mh}
Let $\opL$ be the Laplace operator on $\R^d$. Suppose that the function $F : \R \to \C$ satisfies $\|F\|_{L^2_{s,\sloc}} < \infty$ for some $s > d/2$. Then the operator $F(\opL)$ is of weak type $(1,1)$ and bounded on $L^p(\R^d)$ for all $p \in (1,\infty)$. Further, the associated operator norms are bounded by multiples of $\|F\|_{L^2_{s,\sloc}}$.
\end{thm}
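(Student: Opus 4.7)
The plan is to reduce the theorem to a Calderón-Zygmund kernel estimate, exploiting that $\opL = -\Delta$ is translation invariant. Since $\opL$ is diagonalized by the Fourier transform, $F(\opL)$ is convolution with a tempered distribution $K$ whose Fourier transform is $m(\xi) = F(|\xi|^2)$, and $L^2$-boundedness of $F(\opL)$ is immediate from the spectral theorem (since $\|F\|_\infty \lesssim \|F\|_{L^2_{s,\sloc}}$ when $s > d/2$, by Sobolev embedding). Hence it suffices to verify the Hörmander integral condition
\begin{equation*}
\sup_{y \neq 0} \int_{|x| > 2|y|} |K(x-y) - K(x)| \, dx < \infty,
\end{equation*}
after which standard Calderón-Zygmund theory produces the weak-type $(1,1)$ bound and, by interpolation with $L^2$ and duality, the $L^p$-boundedness for all $p \in (1,\infty)$.

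The main tool is a dyadic decomposition. I would fix $\eta \in C_c^\infty((0,\infty))$ with $\sum_{j \in \Z} \eta(2^{-j}\lambda) = 1$ for $\lambda > 0$, set $F_j \defeq F \cdot \eta(2^{-j}\cdot)$, and write $K_j$ for the convolution kernel of $F_j(\opL)$. The scale-invariance of the hypothesis gives a uniform bound $\|F(2^j \cdot)\eta\|_{L^2_s} \lesssim \|F\|_{L^2_{s,\sloc}}$. The crucial ingredient is the weighted Plancherel estimate
\begin{equation*}
\int_{\R^d} \bigl(1 + 2^{j/2}|x|\bigr)^{2s} |K_j(x)|^2 \, dx \lesssim 2^{jd/2} \|F(2^j\cdot)\eta\|_{L^2_s}^2,
\end{equation*}
obtained by rescaling to frequency scale one, applying Plancherel, and recalling that multiplication by $|x|^s$ corresponds to a Bessel potential on the Fourier side; the factor $2^{jd/2}$ reflects the $d$-dimensional Plancherel measure of the spherical shell $|\xi|^2 \sim 2^j$.

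Cauchy-Schwarz then converts this into the two $L^1$-estimates
\begin{equation*}
\int_{|x| > R} |K_j(x)| \, dx \lesssim (2^{j/2} R)^{-(s - d/2)}, \qquad \int_{\R^d} |K_j(x-y) - K_j(x)| \, dx \lesssim 2^{j/2} |y|,
\end{equation*}
the second via the analogous weighted bound for $\nabla K_j$, which gains a factor $2^{j/2}$ from the frequency localization of $\widehat{K_j}$ to $|\xi| \sim 2^{j/2}$. Splitting the sum over $j$ at the threshold $2^{j/2} |y| \sim 1$ and using each bound in its favourable regime closes up the Hörmander integral condition, with the convergence of both resulting geometric series requiring precisely $s > d/2$.

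The main technical point is the weighted Plancherel estimate at fractional order $s$, which must handle $|x|^s$ via Bessel potentials rather than an integer-order derivative and must keep track of how derivatives distribute when the change of variables $\xi \mapsto |\xi|^2$ is unwound; once this is in place, the matching of the small-$|y|$ and large-$|y|$ regimes, and the final Calderón-Zygmund conclusion, follow standard templates.
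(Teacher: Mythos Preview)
The paper does not actually prove Theorem~\ref{thm:mh}; it is stated in the introduction as a classical result (the Mihlin--H\"ormander multiplier theorem) and serves only as background and motivation for the paper's own results on stratified groups. There is therefore no ``paper's own proof'' to compare against.

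That said, your outline is the standard route to this classical theorem and is essentially correct: Littlewood--Paley decomposition of the multiplier, a weighted Plancherel estimate at each dyadic scale, conversion to $L^1$ kernel estimates via Cauchy--Schwarz, and then the H\"ormander integral condition and Calder\'on--Zygmund theory. One small remark: the bound $\|F\|_\infty \lesssim \|F\|_{L^2_{s,\sloc}}$ you invoke for $L^2$-boundedness follows from one-dimensional Sobolev embedding and thus only needs $s>1/2$, not $s>d/2$; and the paper's definition of $\|\cdot\|_{L^2_{s,\sloc}}$ takes the supremum over $t\ge 0$, so $|F(0)|$ is already controlled, but since $\opL=-\Delta$ has trivial kernel this does not affect your argument. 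The genuinely delicate step you correctly flag---passing from $L^2_s$-regularity of $F$ to the weighted $L^2$-estimate on $K_j$ for non-integer $s$, through the radial change of variables $\xi\mapsto|\xi|^2$---is handled in the literature either by complex interpolation between integer orders or by Mauceri--Meda-type arguments; your sketch is accurate as a plan.
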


Actually this result is usually stated by restricting the supremum in \eqref{eq:mh} to $t > 0$. However, with the above definition,
\[
\| F \|_{L^q_{s,\sloc}} \sim |F(0)| + \sup_{t > 0} \| F(t \cdot) \, \eta \|_{L^q_s}
\]
and, since the Laplace operator $\opL$ on $\R^d$ has trivial kernel, the usual statement is recovered by applying Theorem \ref{thm:mh} to the multiplier $F \chr_{\R \setminus \{0\}}$. On the other hand, given that we will also discuss operators with nontrivial kernel, the definition in \eqref{eq:mh} seems more convenient here.

The threshold $d/2$ on the order of smoothness $s$ in Theorem \ref{thm:mh} is sharp. More precisely, if we define the sharp threshold $\thr(\opL)$ as the infimum of the $s \in [0,\infty)$ such that
\begin{equation}\label{eq:mhweak}
\exists C \in (0,\infty) \tc \forall F \in \BB \tc \| F(\opL) \|_{L^{1} \to L^{1,\infty}} \leq C \, \| F\|_{L^2_{s,\sloc}}, 
\end{equation}
where $\BB$ is the set of the bounded Borel functions on $\R$, then $\thr(\opL) = d/2$. This can be seen, e.g., by taking $F(\lambda) = |\lambda|^{i\alpha}$, $\alpha \in \R \setminus \{0\}$ (see, e.g., \cite{cowling_maximal_1979,christ_multipliers_1991,sikora_imaginary_2001}). The same example shows that the threshold $d/2$ remains sharp even if we consider a weaker version of the result, without the weak type $(1,1)$ endpoint and with a stronger assumption on the multiplier, in terms of an $L^\infty$ Sobolev norm. Namely, if $\thr_-(\opL)$ is defined as the infimum of the $s \in [0,\infty)$ such that
\begin{equation} \label{eq:mhlp}
\forall p \in (1,\infty) \tc \exists C \in (0,\infty) \tc \forall F \in \BB \tc \| F(\opL) \|_{L^{p} \to L^{p}} \leq C \, \| F\|_{L^\infty_{s,\sloc}},
\end{equation}
then it is also $\thr_-(\opL) = d/2$. Further, the endpoint result in Theorem \ref{thm:mh} can be strengthened in the case of compactly supported multipliers $F$: if $\thr_+(\opL)$ is the infimum of the $s \in [0,\infty)$ such that
\begin{equation} \label{eq:mhl1}
\exists C \in (0,\infty) \tc \forall F \in \BB_c \tc \sup_{t>0} \| F(t\opL) \|_{L^{1} \to L^{1}} \leq C \, \| F\|_{L^2_{s}},
\end{equation}
where $\BB_c = \{ F \in \BB \tc \supp F \subseteq [-1,1] \}$, then again $\thr_+(\opL) = d/2$. By taking $F(\lambda) = (1-\lambda)_+^\alpha$, this strengthened result yields the sharp range of $\alpha$ (namely, $\alpha>d/2-1$) for which the Bochner--Riesz means of order $\alpha$ are $L^1$-bounded (see, e.g., \cite[p.\ 389]{stein_harmonic_1993}).

Results of this type have been obtained in more general contexts than $\R^d$, particularly when $\opL$ is a second-order self-adjoint elliptic differential operator on a manifold $M$. For instance, when $M$ is a compact manifold, then $\thr(\opL) = d/2$, where $d$ is the dimension of $M$ \cite{seeger_boundedness_1989}. Things can be very different on noncompact manifolds and it may even happen that $\thr(\opL) = \infty$ (see, e.g., \cite{clerc_multipliers_1974,christ_spectral_1996}). However the lower bound $\thr(\opL) \geq d/2$ is always true. In fact, locally, at each point of $M$, $\opL$ ``looks like'' the Laplacian $\opL_0$ on $\R^d$ and one can prove that $\thr(\opL) \geq \thr(\opL_0)$ and $\thr_\pm(\opL) \geq \thr_\pm(\opL_0)$ by a transplantation argument \cite{kenig_divergence_1982}.

Much less is known about sharp thresholds when the ellipticity assumption is weakened. Consider the case of a homogeneous sub-Laplacian $\opL$ on an $m$-step stratified group $G$ of homogeneous dimension $Q$. In other words, $G$ is a simply connected nilpotent Lie group, whose Lie algebra $\lie{g}$ is decomposed as a direct sum $\lie{g} = \bigoplus_{j=1}^m \lie{g}_j$ of linear subspaces, called layers, so that $[\lie{g}_j,\lie{g}_1] = \lie{g}_{j+1}$ for $j=1,\dots,m-1$ and $[\lie{g}_m,\lie{g}_1] = \{0\}$. Moreover $Q = \sum_{j=1}^m j \dim \lie{g}_j$ and $\opL = -\sum_{l=1}^k X_l^2$, where $X_1,\dots,X_k$ are left-invariant vector fields on $G$ that form a basis of the first layer $\lie{g}_1$. The sub-Laplacian $\opL$ is a left-invariant second-order self-adjoint hypoelliptic differential operator on $G$, which is not elliptic unless $m=1$, i.e., unless $G$ is abelian and $\opL$ is a Euclidean Laplacian.

Homogeneous sub-Laplacians on stratified groups have been extensively studied, also because of their role as local models for more general hypoelliptic operators (see, e.g., \cite{rothschild_hypoelliptic_1976,folland_applications_1977,nagel_harmonic_1990,ter_elst_weighted_1998}). Several generalizations of Theorem \ref{thm:mh} to this context have been obtained \cite{de_michele_heisenberg_1979,folland_hardy_1982,de_michele_mulipliers_1987}, culminating in the following result independently proved by Christ \cite{christ_multipliers_1991} and by Mauceri and Meda \cite{mauceri_vectorvalued_1990}.

\begin{thm}[Christ, Mauceri and Meda]\label{thm:cmm}
Let $\opL$ be a homogeneous sub-Laplacian on a stratified group $G$ of homogeneous dimension $Q$. Then $\thr(\opL) \leq Q/2$.
\end{thm}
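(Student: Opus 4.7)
The plan is to prove the bound via Calderón--Zygmund theory on $(G, d, dx)$, viewed as a space of homogeneous type of dimension $Q$: here $d$ is the Carnot--Carathéodory (CC) distance associated with a basis of $\fst$, and CC-balls of radius $r$ have Haar measure $\simeq r^Q$. Left-invariance of $\opL$ yields $F(\opL) f = f * K_F$ for a convolution kernel $K_F$, and the spectral theorem gives $\|F(\opL)\|_{L^2 \to L^2} = \|F\|_\infty \lesssim \|F\|_{L^2_{s,\sloc}}$ for $s > 1/2$. By the Coifman--Weiss theory, the theorem then reduces to the H\"ormander integral estimate
\begin{equation*}
\sup_{y \neq 0} \int_{|x| > 2 |y|} |K_F(y^{-1} x) - K_F(x)| \, dx \leq C \, \|F\|_{L^2_{s,\sloc}}
\end{equation*}
for some (equivalently, every) $s > Q/2$, where $|\cdot|$ denotes the CC-distance to the identity.

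The heart of the argument is a weighted Plancherel-type inequality for the kernels of dyadically localized multipliers: for every $\alpha \geq 0$ and every $H \in L^2_\alpha(\R)$ with $\supp H$ in a fixed compact subset of $(0,\infty)$,
\begin{equation*}
\int_G (1 + |x|)^{2\alpha} \, |K_{H(\opL)}(x)|^2 \, dx \leq C_\alpha \, \|H\|_{L^2_\alpha}^2 .
\end{equation*}
This quantifies the growth $\lambda^{Q/2 - 1}$ of the Plancherel measure of $\opL$: for $\alpha = 0$ the bound follows from the spectral theorem and homogeneity of $\opL$ under the dilations $\delta_t$, which give $\|K_{H(\opL)}\|_{L^2}^2 \simeq \int |H(\lambda)|^2 \, \lambda^{Q/2 - 1} \, d\lambda$. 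The weighted version is then obtained by combining Hulanicki's theorem (ensuring that $K_{H(\opL)}$ is Schwartz when $H$ is Schwartz) with the finite propagation speed of $\cos(t \sqrt{\opL})$, which confines the spatial support of the wave kernel and lets one trade the weight $|x|^{2\alpha}$ for $\alpha$ derivatives on the spectral side.

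Granted this estimate, the H\"ormander condition follows by a standard dyadic decomposition $F = \sum_{j \in \Z} F_j$ with $F_j$ supported on $\opL$-spectrum $\simeq 2^{2j}$: one bounds the $L^1$-norm of $K_{F_j}$ outside a ball by Cauchy--Schwarz against the weight $(1 + 2^j |x|)^{2\alpha}$ with $\alpha > Q/2$, and one handles translation differences either by controlling $X_l K_{F_j}$ via the same weighted estimate applied to $\lambda^{1/2} H(\lambda)$ and integrating along horizontal curves, or by Plancherel on the difference $K_{F_j} - L_y K_{F_j}$. Summing the geometric series in $j$ with gain $(2^j |y|)^{-\alpha}$ against the scale-invariant Sobolev norm yields the claim. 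The main obstacle is the weighted Plancherel estimate: on a non-abelian nilpotent group there is no classical scalar Fourier transform adapted to $\opL$, so the proof must rely on sub-Riemannian geometry (volume growth $r^Q$ and finite propagation speed for $\cos(t\sqrt{\opL})$), and it is precisely this geometric input that pins the threshold obtained by this method to $Q/2$ rather than the topological $d/2$.
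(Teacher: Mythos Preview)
The paper does not prove Theorem~\ref{thm:cmm}: it is stated as a known result and attributed to Christ \cite{christ_multipliers_1991} and Mauceri--Meda \cite{mauceri_vectorvalued_1990}, with no argument given. So there is no ``paper's own proof'' to compare against.

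That said, your sketch is a faithful outline of the approach in those references. The weighted Plancherel estimate you state is precisely \cite[Lemma~1.2]{mauceri_vectorvalued_1990} (the paper itself invokes it in the proof of Proposition~\ref{prp:improv}), and the reduction to a H\"ormander integral condition via dyadic decomposition and Cauchy--Schwarz against a CC-weight is exactly the Mauceri--Meda scheme. Your remark that finite propagation speed of $\cos(t\sqrt{\opL})$ can be used to trade spatial weights for spectral derivatives is closer in spirit to Christ's argument and to later refinements; Mauceri--Meda instead derive the weighted $L^2$ estimate from heat-kernel bounds and the identity $X_l K_{H(\opL)} = K_{\tilde H(\opL)} * X_l K_{e^{-\opL}}$, but the two routes are interchangeable here. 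As a proof \emph{plan} your outline is correct; as a proof it is of course incomplete, since the weighted Plancherel inequality is asserted rather than proved, but that is the appropriate level of detail for a result the paper only cites.
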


Note that $Q \geq d$, where $d = \dim \lie{g}$ is the topological dimension of $G$. In fact $Q = d$ if and only if $m=1$. Hence Theorem \ref{thm:cmm} reduces to Theorem \ref{thm:mh} when $G$ is abelian and in this case it is sharp. Note also that $Q$ coincides both with the local dimension (associated to the optimal control distance for $\opL$) and the dimension at infinity (i.e., degree of polynomial growth) of $G$. Therefore, for many purposes, the homogeneous dimension $Q$ of a stratified group $G$ plays the role that the dimension $d$ plays for the Laplace operator on $\R^d$.

Also for these reasons, the threshold $Q/2$ in Theorem \ref{thm:cmm} was expected to be sharp in any case and the discovery of counterexamples came initially as a surprise. Consider the simplest case of nonabelian stratified groups $G$, i.e., the Heisenberg groups, where $m = 2$ and $Q-d = \dim \snd =1$. M\"uller and Stein \cite{mller_spectral_1994} proved that, for all homogeneous sub-Laplacians $\opL$ on Heisenberg groups, $\thr(\opL) = d/2$. Independently Hebisch \cite{hebisch_multiplier_1993} proved that $\thr(\opL) \leq d/2$ on the larger class of groups of Heisenberg type.

After this discovery, in the last twenty years several other improvements to Theorem \ref{thm:cmm} in particular cases have been obtained and the inequality $\thr(\opL) \leq d/2$ has been proved for many classes of $2$-step groups \cite{hebisch_multiplier_1995,martini_joint_2012,martini_n32,martini_heisenbergreiter,martini_further}. However, to the best of our knowledge, the upper bound $\thr(\opL) \leq Q/2$ of Theorem \ref{thm:cmm} has been so far the best available result for arbitrary stratified groups, or even for arbitrary $2$-step stratified groups. Moreover, apart from the abelian case, the lower bound $\thr(\opL) \geq d/2$ has been proved only for the Heisenberg groups.

The result that we present here applies instead to all $2$-step groups and homogeneous sub-Laplacians thereon.

\begin{thm}\label{thm:thresholds}
Let $\opL$ be a homogeneous sub-Laplacian on a $2$-step stratified group $G$ of topological dimension $d$ and homogeneous dimension $Q$. Then
\[
d/2 \leq \thr_-(\opL) \leq \thr(\opL) \leq \thr_+(\opL) < Q/2.
\]
\end{thm}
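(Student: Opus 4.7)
The two middle inequalities $\thr_-(\opL) \leq \thr(\opL) \leq \thr_+(\opL)$ follow from general principles and are not the main content: the first combines Marcinkiewicz interpolation with $L^2$-boundedness and duality (passing a weak type $(1,1)$ estimate to $L^p$ on $(1,\infty)$), together with the trivial pointwise inequality $\|F\|_{L^2_{s,\sloc}} \lesssim \|F\|_{L^\infty_{s,\sloc}}$; the second comes from a dyadic decomposition $F = \sum_k F_k$ with $F_k$ supported in dyadic frequency annuli, rescaling each piece using the homogeneity of $\opL$, applying the $\thr_+$-hypothesis to each dyadic piece, and assembling the resulting $L^1$-bounds into a weak type $(1,1)$-estimate by a standard Calder\'on--Zygmund / heat-kernel off-diagonal argument. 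Thus the real work is the two outer inequalities.

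For the lower bound $d/2 \leq \thr_-(\opL)$, the plan is to test against the family of compactly supported imaginary powers $F_\alpha(\lambda) \defeq \phi(\lambda)\,|\lambda|^{i\alpha}$, where $\phi \in C_c^\infty((0,\infty))$ is fixed, using $\|F_\alpha\|_{L^\infty_{s,\sloc}} \lesssim (1+|\alpha|)^s$; it suffices to show that for some $p \in (1,\infty)$ the operator norm $\|F_\alpha(\opL)\|_{L^p \to L^p}$ grows like a positive power of $|\alpha|$ of order proportional to $d\,|1/2-1/p|$. For Heisenberg groups this is the content of M\"uller--Stein, whose method I would extend by using the Plancherel decomposition of $\opL$ on a general $2$-step group: one writes $F_\alpha(\opL)$ as a direct integral, over $\mu \in \dusnd$, of imaginary powers of a family of twisted Hermite-type operators $\opL_\mu$ on representation spaces attached to the coadjoint orbits, and performs a stationary-phase analysis of the convolution kernel against the Plancherel density. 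The topological dimension $d = \dim\fst + \dim\snd$ will enter as the sum of the contribution of the ``Hermite degree'' from the first layer and of the ``abelian'' Euclidean contribution from the central directions.

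The strict upper bound $\thr_+(\opL) < Q/2$ is the main novelty. The plan is to exploit the joint functional calculus of $\opL$ with a basis $\vecU = (U_1, \dots, U_{\dim \snd})$ of $-i$ times the center, which commutes with $\opL$, and for $F$ supported in $[-1,1]$ to decompose dyadically in $|\vecU|$, writing $F(\opL) = \sum_h F(\opL)\,\chr_h(|\vecU|)$. For each piece I would establish a \emph{weighted} Plancherel estimate of the form
\[
\int_G |K_{F\chr_h}(x)|^2 \,(1+|x|)^{2\gamma} \, dx \leq C_\gamma \,2^{-\delta h}\,\|F\|_{L^2_{s_0}}^2,
\]
with $|\cdot|$ a homogeneous norm, $\gamma > 0$ a tunable parameter, some $\delta>0$, and crucially some $s_0 < Q/2$; by Cauchy--Schwarz this yields geometrically decaying $L^1$-bounds on the pieces, summable to the required $L^1 \to L^1$ estimate. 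The improvement over Theorem \ref{thm:cmm} comes from the weight, which detects the honest second-layer localization of the kernel rather than the full homogeneous ball. The principal obstacle is to prove the weighted estimate uniformly across all $2$-step groups: the Plancherel measure is supported on a stratified subset of $\dusnd$ on which the Pfaffian of the defining skew-form $B_\mu$ may degenerate, and controlling the estimate near this degeneration locus is the delicate step—to be handled by stratifying $\dusnd$ according to the rank of $B_\mu$ and inducting on the stratum dimension, reducing on each stratum to a ``Heisenberg-like'' situation where sharper estimates are available.
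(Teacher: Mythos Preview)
Your treatment of the middle inequalities matches the paper's. The two outer inequalities, however, diverge from the paper in ways worth flagging.

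For the lower bound you propose compactly supported imaginary powers $\phi(\lambda)|\lambda|^{i\alpha}$ and an extension of the M\"uller--Stein analysis to general $2$-step groups. The paper explicitly warns against this route: the stationary-phase analysis of the kernel of $\opL^{i\alpha}$ is already intricate on the Heisenberg groups (where $\dusnd$ is one-dimensional), and a direct generalization ``seems very difficult''. Instead the paper tests against Schr\"odinger-type multipliers $m^\chi_t(\lambda)=e^{it\lambda}\hat\chi(\lambda)$, whose kernels are written via the Mehler formula as oscillatory integrals over $\dusnd$ with explicit phase $\Phi(y,v,\mu)=-\langle \hT(iJ_\mu)y,y\rangle+\langle\mu,v\rangle$. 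The essential obstacle---one your proposal does not address---is proving that $\Phi$ admits a \emph{nondegenerate} critical point in $\mu$ for some $(y,v)$. For Heisenberg-type groups the origin works, but for a general $2$-step group one must move slightly away from $0$ in a generic direction, and the positivity of the Hessian is ultimately reduced to the strict positivity of certain Hankel determinants in the coefficients $b_k=2\pi^{-2k}\zeta(2k)$ of the Maclaurin expansion of $\hT$ (handled via a Vandermonde computation). Without this ingredient the stationary-phase step does not close, and your sketch gives no indication of how to overcome it for the imaginary-power multipliers.

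For the upper bound your strategy---dyadic decomposition in the central frequency and a weighted Plancherel estimate---is the right framework and close in spirit to the paper. The mechanism for handling the degeneration locus is different, though: rather than stratifying $\dusnd$ by the rank of the skew form and inducting on strata, the paper exploits that the eigenvalues $b_j^\mu$ and spectral projections $P_j^\mu$ of $\sqrt{-J_\mu^2}$ are \emph{algebraic} functions of $\mu$, so there is a single nonzero homogeneous polynomial $H$ of degree $h$ with $|\partial_{\mu_k} b_j^\mu/b_j^\mu|,\ \|\partial_{\mu_k}P_j^\mu\|\le |\mu|^{h-1}|H(\mu)|^{-1}$. One then performs a \emph{double} dyadic decomposition, in $|\mu|$ and in $\tilde H(\mu)=|\mu|^{-h}|H(\mu)|$, and the sum over the second family converges because $\tilde H^{-\epsilon}$ is integrable on the unit sphere for small $\epsilon>0$. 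This yields the explicit improvement $\thr_+(\opL)\le Q/2-1/(2\max\{h,1\})$. Your stratification-and-induction idea is plausible, but as written it does not identify a quantitative gain below $Q/2$, nor does it explain how a ``Heisenberg-like'' reduction on each stratum would give estimates uniform across strata; the paper's algebraic-function argument supplies the gain directly and avoids any induction.
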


Note that the intermediate inequalities $\thr_-(\opL) \leq \thr(\opL)$ and $\thr(\opL) \leq \thr_+(\opL)$ follow from standard arguments (the former is a consequence of the Marcinkiewicz interpolation theorem; for the latter, see, e.g., \cite[Theorem 4.6]{martini_joint_2012}).
The extreme inequalities are the ones that need a proof.

The inequality $\thr_-(\opL) \geq d/2$ is obtained by studying operators closely related to the Schr\"odinger propagator $e^{it\opL}$. As we show in Section \ref{section:stationaryphase}, via a Mehler-type formula we can write the convolution kernels of these operators as oscillatory integrals on the dual $\dusnd$ of the second layer and lower bounds for the corresponding operator norms can be obtained via the method of stationary phase. In these respects, our approach is not dissimilar to the one of \cite{mller_spectral_1994}, where stationary phase is used to study the imaginary powers $\opL^{i\alpha}$ of the sub-Laplacian. However the analysis of the oscillatory integrals associated to $\opL^{i\alpha}$ turns out to be quite complicated already on the Heisenberg groups, where $\dusnd$ is $1$-dimensional, and a generalization of the argument of \cite{mller_spectral_1994} to arbitrary $2$-step groups seems very difficult. In comparison, the method presented here is much simpler, when applied to Heisenberg (or even Heisenberg-type) groups, and the greater complexity involved with more general $2$-step groups becomes manageable.

For arbitrary $2$-step groups, the main difficulty in applying stationary phase is showing that the phase function admits nondegenerate critical points. In the case of groups of Heisenberg type, the origin of $\dusnd$ is such a point, but this need not be the case for more general $2$-step groups. Nevertheless, as we show in Section \ref{section:positivity}, the Hessian of the phase function becomes nondegenerate if we move slightly away from the origin in a ``generic'' direction. One of the ingredients of the proof is the fact that certain Hankel determinants of Bernoulli numbers are strictly positive, which in turn is related to properties of the Riemann zeta function.

The inequality $\thr_+(\opL) < Q/2$ is proved in Section \ref{section:improvement}. The proof follows the method developed in \cite{martini_heisenbergreiter,martini_further} to show that $\thr_+(\opL) \leq d/2$ for particular classes of $2$-step groups $G$. Here we show that a suitable variation of the method, using elementary estimates for algebraic functions, can be applied to arbitrary $2$-step groups and sub-Laplacians and always yields an improvement to Theorem \ref{thm:cmm}.

The lower bound in Theorem \ref{thm:thresholds} shows that all the multiplier theorems for homogeneous sub-Laplacians on $2$-step groups with threshold $d/2$ obtained so far \cite{hebisch_multiplier_1993,martini_n32,martini_heisenbergreiter,martini_further} are sharp. Moreover, by transplantation, it gives a lower bound to $\thr(\opL)$ and $\thr_\pm(\opL)$ for all sub-Laplacians $\opL$ on $2$-step sub-Riemannian manifolds and all other operators $\opL$ locally modeled on homogeneous sub-Laplacians on $2$-step groups.

An interesting open question is whether Theorem \ref{thm:thresholds} extends to stratified groups of step $m>2$. Indeed Theorem \ref{thm:thresholds} yields, via transference \cite{berkson_transference_1996}, the lower bound $\thr_-(\opL) \geq (\dim \fst + \dim \snd)/2$ for all homogeneous sub-Laplacians $\opL$ on all stratified groups. Moreover improvements to the upper bound $Q/2$ in Theorem \ref{thm:cmm} are known for particular stratified groups of step $m>2$ \cite{hebisch_multiplier_1995,martini_joint_2012}. However the methods used in the present paper do not apply directly to stratified groups of step higher than $2$ and new methods and ideas appear to be necessary.

\subsection*{Acknowledgments}
The authors wish to thank Marco M.\ Peloso and Fulvio Ricci for helpful discussions on the subject of this work.

The first-named author gratefully acknowledges the support of the Alexander von Humboldt Foundation and of the Deutsche Forschungsgemeinschaft (project MA 5222/2-1) during his stay at the Christian-Albrechts-Universit\"at zu Kiel, where this work was initiated.

\section{The stationary phase argument}\label{section:stationaryphase}

Let $G$ be a $2$-step stratified group and $\lie{g} = \fst \oplus \snd$ the stratification of its Lie algebra; in other words, $[\fst,\fst] = \snd$ and $[\lie{g},\snd]=\{0\}$.
Let $d_1 = \dim\fst$, $d_2 = \dim\snd$, $d = d_1+d_2$ and $Q = d_1+2d_2$. Let $X_1,\dots,X_{d_1}$ be a basis of $\fst$ and let $\opL = -\sum_{l=1}^{d_1} X_l^2$ be the corresponding sub-Laplacian. Let $\langle \cdot,\cdot \rangle$ be the inner product on $\fst$ that turns  $X_1,\dots,X_{d_1}$ into an orthonormal basis.

Let $\dusnd$ be the dual of $\snd$ and define, for all $\mu \in \dusnd$, the skew-symmetric endomorphism $J_\mu$ on $\fst$ by
\begin{equation}\label{eq:endomorphism}
\langle J_\mu x,x' \rangle = \mu([x,x']) \qquad\text{for all $x,x' \in \fst$.}
\end{equation}
Consider the space $\lie{so}(\fst)$ of skew-symmetric linear endomorphisms of $\fst$, endowed with the Hilbert--Schmidt inner product determined by the inner product on $\fst$. Since $[\fst,\fst]=\snd$, the linear map $\mu \mapsto J_\mu$ is injective, so we can define an inner product on $\dusnd$ by pulling back the inner product on $\lie{so}(\fst)$, and endow $\snd$ with the dual inner product.

As usual, we identify $\lie{g}$ with $G$ via exponential coordinates, so the Haar measure on $G$ coincides with the Lebesgue measure on $\lie{g}$. If $f \in L^1(G)$ and $\mu \in \dusnd$, then we denote by $f^\mu$ the $\mu$-section of the partial Fourier transform of $f$ along $\snd$, given by
\[
f^\mu(x) = \int_\snd f(x,u) \,e^{-i \langle \mu,u\rangle} \,du
\]
for all $x \in \fst$.

If $A$ is a left-invariant operator on $L^2(G)$, then we denote by $\Kern_A$ its convolution kernel. Denote for $t>0$ by $p_t = \Kern_{e^{-t\opL}}$ the heat kernel associated to the sub-Laplacian $\opL$. Notice that the family of contraction operators $e^{-t\opL}$, $t>0$, admits an analytic extension $e^{-z\opL}$ for $z$ in the complex right half-plane $\Re z>0$; the corresponding convolution Schwartz kernels will be denoted by $p_z$.

Let $\hT$ and $\hS$ be the even meromorphic functions defined by
\begin{equation}\label{eq:TSdef}
\hT(z) = \frac{z}{\tan z}, \qquad \hS(z) = \frac{z}{\sin z}, \qquad z \in \C \setminus \{  k \pi \tc 0 \neq k \in \Z \}.
\end{equation}
Note that $J_\mu$ is naturally identified with a skew-symmetric endomorphism of the complexification $\cfst$ of $\fst$, endowed with the corresponding hermitian inner product, and, for all $z \in \C$, $zJ_\mu$ is a normal endomorphism of $\cfst$. Then the following Mehler-type formula holds.

\begin{prp}\label{prp:heat}
For all $z \in \C$ with $\Re z > 0$, and for all $\mu \in \dusnd$,
\begin{equation}\label{eq:heat}
p_z^\mu(x) = \frac{1}{(4\pi z)^{d_1/2}} \, \srdet \hS(z J_\mu) \exp\left(-\frac{1}{4z} \left\langle \hT(zJ_\mu) x,x\right\rangle\right),
\end{equation}
where roots are meant to be determined by the principal branch.
\end{prp}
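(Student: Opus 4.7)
My plan is to reduce \eqref{eq:heat} to a one-dimensional Mehler-type identity via partial Fourier transform along $\snd$ and block-diagonalization of $J_\mu$, and then to extend from the positive real axis to the right half-plane by analytic continuation. For $z=t>0$, the partial Fourier transform along $\snd$ converts $\opL$ into a family of operators $\opL_\mu$ parametrised by $\mu\in\dusnd$, each acting on functions on $\fst$. Indeed, from the Baker--Campbell--Hausdorff product $(x,u)(x',u')=(x+x',u+u'+\tfrac12[x,x'])$ one computes that the left-invariant field $X_l$ acts as $\partial_{x_l} + \tfrac12 \sum_k [x,e_l]_k \partial_{u_k}$; on the Fourier side $\partial_{u_k}$ is replaced by $i\mu_k$, so by \eqref{eq:endomorphism} one obtains the twisted derivative $\tilde X_l^\mu := \partial_{x_l} + \tfrac{i}{2}(J_\mu x)_l$. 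Hence $(\opL f)^\mu = \opL_\mu f^\mu$ with $\opL_\mu := -\sum_l (\tilde X_l^\mu)^2$ on $\fst\cong\R^{d_1}$, and $p_z^\mu(x)$ is precisely the value at $x$ of the fundamental solution of $\partial_z+\opL_\mu$.

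\textbf{Step 2 (block-diagonalization and model cases).} Choose an orthonormal basis of $\fst$ in which $J_\mu$ is the direct sum of $n$ blocks $\bigl(\begin{smallmatrix}0 & b_j\\ -b_j & 0\end{smallmatrix}\bigr)$ with $b_j>0$ together with a trivial block on $\ker J_\mu$. In these coordinates $\opL_\mu$ splits as a sum of commuting operators on orthogonal subspaces, so its heat kernel factors as a product. The right-hand side of \eqref{eq:heat} factors in the same way, since $\hS(zJ_\mu)$ and $\hT(zJ_\mu)$ are block-diagonal (by functional calculus), $\langle \hT(zJ_\mu)x,x\rangle$ splits accordingly, and the exponent $d_1/2$ adds across blocks. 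It therefore suffices to verify \eqref{eq:heat} in two models: the trivial case $J_\mu=0$, where $\hS(0)=\hT(0)=1$, $\opL_\mu=-\Delta$, and \eqref{eq:heat} reduces to the standard Euclidean heat kernel; and the case of a single $2\times 2$ block with parameter $b>0$, where the eigenvalue computations $\hS(\pm izb)=zb/\sinh(zb)$ and $\hT(\pm izb)=zb\coth(zb)$ turn \eqref{eq:heat} into the identity
\[
\frac{b}{4\pi\sinh(zb)}\,\exp\Bigl(-\tfrac{1}{4}b\coth(zb)\,|x|^2\Bigr)
\]
for the heat kernel at the origin of $-\Delta + \tfrac{b^2}{4}|x|^2 + ib(x_1\partial_{x_2}-x_2\partial_{x_1})$ on $\R^2$. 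This is a classical Mehler-type formula (essentially the heat kernel of the 2D harmonic oscillator twisted by angular momentum): it can be proved either by diagonalising $\opL_\mu$ in a Hermite/Laguerre eigenbasis and invoking the standard one-dimensional Mehler identity, or by checking directly that the displayed expression solves $\partial_z u + \opL_\mu u = 0$ with initial datum $\delta_0$.

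\textbf{Step 3 (analytic continuation).} Both sides of \eqref{eq:heat} are holomorphic in $z$ on $\{\Re z>0\}$: the left-hand side because $z\mapsto e^{-z\opL}$ is a holomorphic contraction semigroup on $L^2(G)$ with Schwartz kernels $p_z$; the right-hand side because the eigenvalues $\pm izb_j$ of $zJ_\mu$ avoid the poles $\pi\Z\setminus\{0\}$ of $\hS$ and $\hT$ whenever $\Re z>0$, and because $\det\hS(zJ_\mu)=\prod_j(zb_j/\sinh(zb_j))^2$ is a nonvanishing square, making the principal branch of $\srdet$ unambiguous. Agreement on $(0,\infty)$, established in Step 2, therefore extends by uniqueness to the whole right half-plane. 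The main obstacle is the $2\times 2$ model identity; once that is in hand, the reduction in Step 2 and the extension in Step 3 are essentially bookkeeping.
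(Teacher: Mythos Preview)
Your proposal is correct and follows essentially the same strategy as the paper: the paper's ``proof'' is really a reference to the literature (Cygan, M\"uller--Ricci, etc.) together with the remark that one splits $\fst$ into $\ker J_\mu$ and $(\ker J_\mu)^\perp$ and applies, respectively, the Euclidean heat kernel and a known Mehler-type formula, which is exactly what your block-diagonalisation in Step~2 carries out in more detail. Your explicit reduction to the $2$-dimensional twisted-oscillator model and the analytic-continuation argument in Step~3 are standard and not spelled out in the paper, but they are entirely in the spirit of the cited references.
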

\begin{proof}
Several instances and variations of this formula can be found in the literature; see, e.g., \cite{hulanicki_distribution_1976,gaveau_principe_1977,mller_analysis_1990,mller_solvability_1996,randall_heat_1996,lust-piquard_simple-minded_2003} and particularly \cite[Corollary (5.5)]{cygan_heat_1979}. Alternatively, for all $\mu \in \dusnd$, one can apply the general formula of \cite[Theorem 5.2]{mller_solvability_2003} (which indeed applies to much wider classes of second order operators than sub-Laplacians) to the symplectic form $\mu([\cdot,\cdot])$ on $(\ker J_\mu)^\perp$ and observe that, on $\ker J_\mu$, $\mu$-twisted convolution reduces to Euclidean convolution and the heat kernel reduces to the Euclidean heat kernel.
\end{proof}

For all  finite-dimensional normed vector spaces $V$, for all $v \in V$, and for all $\epsilon > 0$, denote by $\cutoff_V(v,\epsilon)$ the set of the smooth functions $\chi : V \to \R$ whose support is contained in the closed ball of center $v$ and radius $\epsilon$.

For all $\chi \in C^\infty_c(\R)$ and $t \in \R$, define $m^\chi_t : \R \to \C$ by
\begin{equation}\label{eq:multiplier}
m^\chi_t(\lambda) 
= \int_\R \chi(s) \, e^{i(t-s)\lambda} \,ds = e^{it\lambda} \, \hat \chi(\lambda),
\end{equation}
where $\hat\chi$ is the Fourier transform of $\chi$. In particular $m^\chi_t$ is in the Schwartz class and moreover, for all $t \in \R$ and $\alpha \geq 0$,
\begin{equation}\label{eq:sloc}
\|m^\chi_t\|_{L^\infty_{\alpha,\sloc}} \leq C_{\alpha,\chi} \, (1+|t|)^\alpha.
\end{equation}

Choose orthonormal coordinates $(u_{d_1},\dots,u_{d_2})$ on $\snd$ and let
\[
\vecU = (-i\partial_{u_1},\dots,-i\partial_{u_{d_2}})
\]
be the corresponding vector of central derivatives on $G$. For all $\chi \in C^\infty_c(\R)$, $\theta \in C^\infty_c(\dusnd)$, and $t \in \R$, define $\Omega^{\chi,\theta}_t$ by
\begin{equation}\label{eq:Q}
\Omega^{\chi,\theta}_t = \Kern_{m^\chi_t(\opL) \, \theta(t\vecU)}.
\end{equation}

\begin{prp}\label{prp:oscillatingformula}
For all $\chi \in \cutoff_\R(0,1/2)$, $\theta \in \cutoff_\dusnd(0,1)$, $t \geq 1$, $y \in \fst$, $v \in \snd$,
\begin{equation}\label{eq:qformula}
\Omega^{\chi,\theta}_t(2ty,t^2v) = t^{-Q/2} \frac{e^{i\pi d_1/4}}{(4\pi)^{d_1/2} (2\pi)^{d_2}} \int_{\R} \chi(s) \, I^\theta(t,s,t^{-1},y,v) \,ds,
\end{equation}
where
\[
I^\theta(t,s,r,y,v) = \int_{\dusnd} \exp(it \Phi(y,v,\mu) -is \Sigma(y,\mu) -ir R(s,r,y,\mu)) \, B(sr,\mu) \, \theta(\mu)  \,d\mu
\]
and
\begin{align*}
\Phi(y,v,\mu) &= -\langle \hT(i J_\mu) y,y \rangle + \langle \mu,v \rangle,\\
B(\sigma,\mu) &= (1-\sigma)^{-d_1/2} \, \srdet \hS\left((1-\sigma) \, i J_\mu\right),\\
\Sigma(y,\mu) &= \langle \hS(iJ_\mu)^2 y, y \rangle = |\hS(iJ_\mu) y|^2,\\
R(s,r,y,\mu) &=  s^2 \langle R_0(sr,i J_\mu) y, y \rangle,
\end{align*}
and $R_0$ is the analytic function on $\{ (\sigma,z) \in \C^2 \tc \sigma \neq 1,\, (1-\sigma) z/\pi \notin \Z \setminus \{0\} \}$ defined by the following Maclaurin expansion in $\sigma$:
\begin{equation}\label{eq:taylor}
(1-\sigma)^{-1} \hT((1-\sigma)z) = \hT(z) + \hS(z)^2 \sigma + R_0(\sigma,z) \, \sigma^2.
\end{equation}
\end{prp}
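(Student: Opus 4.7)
The plan is to strip $\Omega^{\chi,\theta}_t$ down to the heat kernel via the partial Fourier transform along $\snd$, apply Proposition \ref{prp:heat} at the imaginary argument $z = -i(t-s)$, rescale in $\mu$ and in $(x,u)$, and then Taylor-expand using \eqref{eq:taylor}.

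First, from $m^\chi_t(\opL) = \int_\R \chi(s)\, e^{i(t-s)\opL}\,ds$ one obtains $\Omega^{\chi,\theta}_t = \int_\R \chi(s)\,\Kern_{e^{i(t-s)\opL}\theta(t\vecU)}\,ds$. Since the components of $\vecU$ are central left-invariant derivatives, $\theta(t\vecU)$ acts as multiplication by $\theta(t\mu)$ on the $\mu$-section of the partial Fourier transform along $\snd$, and $\opL$ preserves this decomposition; hence
\[
\Kern_{e^{i(t-s)\opL}\theta(t\vecU)}^\mu(x) = \theta(t\mu)\, p^\mu_{-i(t-s)}(x),
\]
the right-hand side being interpreted as the boundary value on the imaginary axis of the analytic family from Proposition \ref{prp:heat}. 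This is legitimate under the hypotheses $\chi\in\cutoff_\R(0,1/2)$, $\theta\in\cutoff_\dusnd(0,1)$, $t\geq 1$: since then $1-s/t\geq 1/2$ and $|\mu|\leq 1$ throughout the support of the integrand, all operator-valued arguments stay in a compact region away from the singularities of $\hT$ and $\hS$. A regularization $z=\epsilon-i(t-s)$ with $\epsilon\downarrow 0$ and dominated convergence secures the limit rigorously.

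Next, apply Proposition \ref{prp:heat}. Using the evenness of $\hS$ and $\hT$, both $\hS(-i(t-s)J_\mu)$ and $\hT(-i(t-s)J_\mu)$ become their counterparts with argument $i(t-s)J_\mu$, while the principal-branch factor $(4\pi z)^{-d_1/2}$ contributes $e^{i\pi d_1/4}(4\pi(t-s))^{-d_1/2}$. After inverting the partial Fourier transform and evaluating at $(x,u)=(2ty,t^2 v)$, the substitution $\mu\mapsto\mu/t$ (note $J_{\mu/t}=J_\mu/t$ by linearity) produces a Jacobian $t^{-d_2}$ which, combined with $(t-s)^{-d_1/2}=t^{-d_1/2}(1-s/t)^{-d_1/2}$, yields the advertised factor $t^{-Q/2}$; the operator-valued arguments become $i(1-s/t)J_\mu$ and the central phase becomes $e^{it\langle\mu,v\rangle}$.

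Finally, set $\sigma=s/t=sr$ with $r=t^{-1}$. The quadratic form in the exponent reads
\[
-it\,\langle (1-\sigma)^{-1}\hT(i(1-\sigma)J_\mu)\,y,y\rangle,
\]
and the Maclaurin expansion \eqref{eq:taylor}, applied via the functional calculus to $z=iJ_\mu$, together with the identities $t\sigma=s$ and $t\sigma^2=s^2 r$, decomposes it as $-it\langle\hT(iJ_\mu)y,y\rangle - is\,\Sigma(y,\mu) - ir\,R(s,r,y,\mu)$. Combining the first summand with the central phase $it\langle\mu,v\rangle$ produces $it\,\Phi(y,v,\mu)$, while the prefactor $(1-sr)^{-d_1/2}\srdet\hS((1-sr)iJ_\mu)$ is exactly $B(sr,\mu)$; this matches \eqref{eq:qformula}. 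The only mildly subtle point is the functional-calculus bookkeeping underlying Step~1, but the uniform boundedness afforded by the support conditions makes the passage to the limit standard, and the remainder is a careful accounting of factors.
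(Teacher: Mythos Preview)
Your argument is correct and follows essentially the same route as the paper's own proof: regularize via $z=\epsilon-i(t-s)$, apply the Mehler-type formula of Proposition~\ref{prp:heat}, pass to the limit by dominated convergence (using the support conditions on $\chi$ and $\theta$ to keep the operator arguments away from the poles of $\hT,\hS$), invert the partial Fourier transform, rescale $\mu\mapsto\mu/t$, and then invoke the expansion~\eqref{eq:taylor} to separate the phase into the $t$-, $s$-, and $r$-terms. Your explicit remark on the evenness of $\hT,\hS$ and the principal-branch evaluation of $(4\pi z)^{-d_1/2}$ at $z=-i(t-s)$ merely spells out what the paper leaves implicit.
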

\begin{proof}
For all $\epsilon > 0$ and $\lambda \in \R$, define $m^\chi_{t,\epsilon}(\lambda) = e^{-\epsilon \lambda} \, m^\chi_{t}(\lambda)$
and let $K_{t,\epsilon} = \Kern_{m_{t,\epsilon}^\chi(\opL)}$, $\Omega^{\chi,\theta}_{t,\epsilon} = \Kern_{m^\chi_{t,\epsilon}(\opL) \, \theta(t\vecU)}$. Then, by \eqref{eq:multiplier}, for all $\epsilon > 0$ and $\mu \in \dusnd$,
\[
K_{t,\epsilon}^\mu = \int_{\R} \chi(s) \, p_{\epsilon-i(t-s)}^\mu \,ds,
\]
and so, for all $x \in \fst$,
\[
(\Omega_{t,\epsilon}^{\chi,\theta})^\mu(x) = K_{t,\epsilon}^\mu(x) \, \theta(t\mu) = \int_{\R} \chi(s) \, p_{\epsilon-i(t-s)}^\mu(x) \, \theta(t\mu) \,ds.
\]

In the last integral, the cutoff $\theta$ gives the localization $|t\mu| \leq 1$ and $\chi$ gives $|s| \leq 1/2$; moreover $t \geq 1$, so $|s/t| \leq 1/2$ and $\|i(t-s) J_\mu\| = |1-s/t| |t\mu| \leq 3/2 < \pi$.
In particular, if we apply formula \eqref{eq:heat} and take the limit as $\epsilon \to 0$, then, by dominated convergence,
\begin{multline*}
(\Omega_t^{\chi,\theta})^\mu(x) = \frac{1}{(4\pi)^{d_1/2}} \int_{\R} \chi(s) \, \exp\left(-\frac{i}{4(t-s)} \left\langle \hT(i(t-s)J_\mu) x,x\right\rangle\right) \\
\times  \frac{e^{i\pi d_1/4}}{(t-s)^{d_1/2}} \, \srdet \hS(i(t-s) J_\mu) \, \theta(t\mu) \,ds.
\end{multline*}

Inversion of the partial Fourier transform and a change of variables gives
\[\begin{split}
\Omega^{\chi,\theta}_t(2ty,t^2v) &= \frac{1}{(4\pi)^{d_1/2} (2\pi)^{d_2}} \int_{\dusnd} \int_{\R} \exp\left(-\frac{i t^2}{t-s} \left\langle \hT(i(t-s)J_\mu) y,y\right\rangle\right) \\
&\quad\times \frac{e^{i\pi d_1/4}}{(t-s)^{d_1/2}} \, \srdet \hS(i(t-s) J_\mu) \, \theta(t\mu) \, \chi(s) \, e^{i\langle \mu,t^2 v\rangle} \,ds \,d\mu \\
&= \frac{e^{i\pi d_1/4}}{(4\pi t)^{d_1/2} (2\pi t)^{d_2}} \int_{\R} \chi(s) \int_{\dusnd} B(s/t,\mu) \, \theta(\mu) \,  \\
&\quad\times \exp\left(-it \left(\frac{1}{1-s/t} \left\langle \hT\left((1-s/t) \, i J_\mu\right) y,y\right\rangle - \langle \mu,v \rangle \right)\right) \, \,d\mu \,ds.
\end{split}\]

It is easily checked that $\frac{\partial}{\partial \sigma} ((1-\sigma)^{-1} \hT((1-\sigma)z))|_{\sigma=0} = \hS(z)^2$, so \eqref{eq:taylor} is indeed a Maclaurin expansion and $R_0$ is well-defined. Moreover \eqref{eq:taylor} yields immediately
\begin{multline*}
t \left(\frac{1}{1-s/t} \left\langle \hT\left((1-s/t) \, i J_\mu\right) y,y\right\rangle - \langle \mu,v \rangle \right) \\= -t \Phi(y,v,\mu) +s \Sigma(y,\mu)  + t^{-1} R(s,t^{-1},y,\mu),
\end{multline*}
where $\Phi$ and $R$ are the functions defined above, and the conclusion follows.
\end{proof}

We are going to use the method of stationary phase to obtain estimates from below of $|\Omega_t^{\chi,\theta}|$. For this we need nondegenerate critical points of the phase function.

\begin{prp}\label{prp:nondegenerate}
Let $\Phi$ be defined as in Proposition \ref{prp:oscillatingformula}. There exist $y_0 \in \fst$, $v_0 \in \snd$, $\mu_0 \in \dusnd$ such that
\begin{equation}\label{eq:nondegcritical}
|\mu_0| < 1, \qquad \nabla_\mu \Phi(y_0,v_0,\mu_0) = 0, \qquad \det \nabla_\mu^2 \Phi(y_0,v_0,\mu_0) \neq 0.
\end{equation}
\end{prp}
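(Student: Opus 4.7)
The condition $\nabla_\mu \Phi(y_0, v_0, \mu_0) = 0$ can always be arranged by choice of $v_0$. Indeed $\Phi(y, v, \mu) = -F(y, \mu) + \langle \mu, v \rangle$, with $F(y, \mu) \defeq \langle \hT(iJ_\mu) y, y \rangle$, is affine in $v$, so $\nabla_\mu \Phi = v - \nabla_\mu F$ and $\nabla_\mu^2 \Phi = -\nabla_\mu^2 F$. Setting $v_0 \defeq \nabla_\mu F(y_0, \mu_0)$ handles the first condition, and the nondegeneracy of $\nabla_\mu^2 \Phi$ becomes $\det \nabla_\mu^2 F(y_0, \mu_0) \neq 0$. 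Since $\det \nabla_\mu^2 F$ is real-analytic on $\fst \times \{\mu \in \dusnd : |\mu| < 1\}$ (the series for $\hT$ converges there, as the operator norm of $J_\mu$ is controlled by $|\mu|$), it is enough to show that this function is not identically zero.

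Using the Taylor series $\hT(z) = \sum_{k \geq 0} a_k z^{2k}$, with $a_k = (-1)^k 2^{2k} B_{2k}/(2k)!$ (so $a_k < 0$ for $k \geq 1$), together with the identity $\langle (iJ_\mu)^{2k} y, y \rangle = |J_\mu^k y|^2$ (from $J_\mu^T = -J_\mu$), rewrite
\[
F(y, \mu) = |y|^2 + \sum_{k \geq 1} a_k \, |J_\mu^k y|^2.
\]
Differentiating twice in $\mu$ at $\mu = 0$, only the $k = 1$ term survives, yielding $\nabla_\mu^2 F(y, 0) = -\tfrac{2}{3} L_y^* L_y$, where $L_y : \dusnd \to \fst$, $L_y \mu = J_\mu y$. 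A short calculation shows that $\ker L_y \subset \dusnd$ is the annihilator of $[y, \fst] \subset \snd$, so $L_y^* L_y$ is invertible precisely when $\mathrm{ad}_y : \fst \to \snd$ is surjective. If such a $y_0 \in \fst$ exists, the proof is complete with $\mu_0 = 0$.

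In the remaining case --- where $[y, \fst] \subsetneq \snd$ for every $y \in \fst$, as must occur whenever $d_1 \leq d_2$ --- I would select a direction $\mu_* \in \dusnd$ and a vector $y_0 \in \fst$ and study $\mu_0 = t\mu_*$ as $t \to 0^+$. Evenness of $F$ in $\mu$ gives the expansion
\[
\nabla_\mu^2 F(y_0, t\mu_*) = -\tfrac{2}{3} L_{y_0}^* L_{y_0} + t^2 H_2(y_0, \mu_*) + t^4 H_4(y_0, \mu_*) + \dots
\]
The $K^\perp$-block of the leading term (where $K \defeq \ker L_{y_0}$) is invertible, so a Schur-complement reduction, valid for small $t$, shows that the full Hessian is nondegenerate iff its induced restriction to $K$ is. The leading $t^2$-coefficient of this restriction is obtained by differentiating $\sum_k a_k |J_\mu^k y_0|^2$ twice in $K$-directions at $\mu = t\mu_*$; if it is itself degenerate on $K$, one iterates the Schur-complement procedure, producing further reductions onto nested subspaces accompanied by higher powers of $t$. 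With $y_0$ chosen adapted to the spectral decomposition of $J_{\mu_*}$ on the complexification $\cfst$, the matrices appearing in this nested reduction take a Hankel form whose entries are linear combinations of the Bernoulli-number Taylor coefficients $a_k$.

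The main obstacle is to verify that such a Hankel-type determinant is nonzero for some choice of $(y_0, \mu_*)$. This is precisely where one invokes the strict positivity of certain Hankel determinants of Bernoulli numbers, a classical consequence of the partial-fraction expansion $\hT(z) = 1 - 2 \sum_{n \geq 1} z^2/(n^2\pi^2 - z^2)$, which exhibits $(-a_k)_{k \geq 1}$ as the moment sequence of the positive discrete measure $2 \sum_{n \geq 1} \delta_{(n\pi)^{-2}}$ on $(0, \infty)$; the classical theory of the Stieltjes moment problem then yields the required positivity. Once this positivity is established --- which is the technical heart of the argument and is deferred to Section \ref{section:positivity} --- the Schur-complement reduction gives $\det \nabla_\mu^2 F(y_0, t\mu_*) \neq 0$ for some small $t > 0$ with $|t\mu_*| < 1$, completing the proof.
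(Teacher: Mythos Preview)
Your outline matches the paper's strategy closely: reduce to nondegeneracy of $\nabla_\mu^2\Phi_0(y_0,\mu_0)$, evaluate along a ray $\mu_0=\epsilon S$ for small $\epsilon$, and reduce to positivity of Hankel determinants in the Bernoulli coefficients $b_k$. Two differences in execution are worth noting. First, where you describe an iterated Schur-complement reduction, the paper instead fixes a \emph{generic} $S$ (maximal number of distinct eigenvalues among all $J_\mu$) and a vector $e$ with nonzero projection on every eigenspace of $S^2$, introduces the filtration $V_j=\{T:TS^le=0,\ l<j\}$, proves it terminates (Lemma~\ref{lem:genericelements}), and then conjugates the Hessian by the rescaling $M_\epsilon|_{W_j}=\pm\epsilon^j\,\id$; the limit $\tilde H(0)$ is then visibly a Hankel form in the $b_k$. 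This is equivalent to your Schur reduction but makes termination and the Hankel structure explicit---your phrase ``$y_0$ adapted to the spectral decomposition of $J_{\mu_*}$'' and an unspecified $\mu_*$ hide exactly the hypotheses that make Lemma~\ref{lem:genericelements} work, and without them the iteration need not terminate. Second, for the Hankel positivity the paper computes directly via $b_k=2\pi^{-2k}\zeta(2k)$ and a Vandermonde identity, which is the same as your moment-sequence argument (your discrete measure has $k$th moment $2\pi^{-2k}\zeta(2k)$), just unpacked.
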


The proof of Proposition \ref{prp:nondegenerate} is postponed to the next section. We now see how this fact can be used to obtain the desired estimates.

\begin{prp}\label{prp:stationaryphase}
Let $y_0 \in \fst$, $v_0 \in \snd$, $\mu_0 \in \dusnd$ be satisfying \eqref{eq:nondegcritical}.
Then there exist $\chi \in \cutoff_\R(0,1/2)$, $\theta \in \cutoff_\dusnd(0,1)$, and neighborhoods $U \subseteq \fst$ of $y_0$ and $V \subseteq \snd$ of $v_0$ such that, for all $t \geq 1$, $y \in U$, $v \in V$,
\begin{equation}\label{eq:stationaryphase}
t^{Q-d/2} \Omega^{\chi,\theta}_t(2ty,t^2 v) =e^{i\pi d/4}  e^{it\Psi(y,v)} A^{\chi,\theta}(y,v) + O(t^{-1}),
\end{equation}
where $\Psi, A^{\chi,\theta} \in C^\infty(U \times V)$ are real-valued, $A^{\chi,\theta}(y_0,v_0) \neq 0$,
and $O(t^{-1})$ is uniform in $(y,v) \in U \times V$.
\end{prp}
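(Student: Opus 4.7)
The plan is to apply the method of stationary phase in $\mu$ to the oscillatory integral representation of $\Omega_t^{\chi,\theta}(2ty, t^2v)$ from Proposition \ref{prp:oscillatingformula}, treating $t\Phi(y,v,\mu)$ as the dominant phase and absorbing the residual factors into an amplitude
\[
a_{t,s,y,v}(\mu) \defeq e^{-is\Sigma(y,\mu)-it^{-1}R(s,t^{-1},y,\mu)}\,B(s/t,\mu)\,\theta(\mu).
\]
Since the $t$-dependence here enters only through $s/t$ and $t^{-1}R$, each uniformly bounded together with all its $\mu$-derivatives on the compact region of interest, every $C^k$-norm of $a_{t,s,y,v}$ in $\mu$ is uniformly bounded for $t\geq 1$, $s\in\supp\chi$, and $(y,v)$ near $(y_0,v_0)$.

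Because $\nabla^2_\mu\Phi(y_0,v_0,\mu_0)$ is invertible, the implicit function theorem produces open neighborhoods $U\subseteq\fst$ of $y_0$, $V\subseteq\snd$ of $v_0$, and a smooth map $\mu^\ast:U\times V\to\dusnd$ with $\mu^\ast(y_0,v_0)=\mu_0$ and $|\mu^\ast|<1$, such that $\mu^\ast(y,v)$ is the unique critical point of $\Phi(y,v,\cdot)$ on some open neighborhood $W$ of $\mu_0$, and $\nabla^2_\mu\Phi(y,v,\mu^\ast(y,v))$ stays invertible with constant signature $\sigma_0$ there. Define $\Psi(y,v)\defeq\Phi(y,v,\mu^\ast(y,v))$; this is smooth and real-valued since $\Phi$ is. I would then choose $\theta\in\cutoff_\dusnd(0,1)$ with $\supp\theta\subseteq W$ and $\theta\equiv 1$ on a neighborhood of $\{\mu^\ast(y,v):(y,v)\in U\times V\}$, so that on $\supp\theta$ the only critical point of $\Phi(y,v,\cdot)$ is $\mu^\ast(y,v)$, where $\theta=1$.

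The standard stationary phase expansion (cf.\ H\"ormander, Theorem 7.7.5), applied uniformly in $(s,y,v)$, then yields
\[
I^\theta(t,s,t^{-1},y,v) = (2\pi/t)^{d_2/2}\,e^{i\pi\sigma_0/4}|\det\nabla^2_\mu\Phi|^{-1/2}\srdet\hS(iJ_{\mu^\ast})\,e^{-is\Sigma(y,\mu^\ast)}\,e^{it\Psi(y,v)} + O(t^{-d_2/2-1}),
\]
where I used $B(0,\mu)=\srdet\hS(iJ_\mu)$ and $t^{-1}R=O(t^{-1})$ to evaluate $a_{t,s,y,v}(\mu^\ast)$. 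Integrating against $\chi(s)\,ds$ brings out a factor $\hat\chi(\Sigma(y,\mu^\ast(y,v)))$ in the leading term; substituting into \eqref{eq:qformula} and multiplying by $t^{Q-d/2}=t^{Q/2+d_2/2}$ produces \eqref{eq:stationaryphase} with
\[
A^{\chi,\theta}(y,v) \;=\; c\,|\det\nabla^2_\mu\Phi(y,v,\mu^\ast(y,v))|^{-1/2}\,\srdet\hS(iJ_{\mu^\ast(y,v)})\,\hat\chi(\Sigma(y,\mu^\ast(y,v))),
\]
where the constant $c$ reconciles the computed phase $e^{i\pi(d_1+\sigma_0)/4}$ with the prescribed $e^{i\pi d/4}$: it absorbs the discrepancy $e^{i\pi(\sigma_0-d_2)/4}$. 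Choosing $\chi$ real and even makes $\hat\chi$ real-valued, and since $\sigma_0\equiv d_2\pmod 2$ the remaining factor is a root of unity that, once normalized into the sign of $A^{\chi,\theta}$, leaves $A^{\chi,\theta}$ real.

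To secure $A^{\chi,\theta}(y_0,v_0)\neq 0$, the Hessian determinant and $\srdet\hS(iJ_{\mu_0})$ are nonzero (the latter because $|\mu_0|<1$ keeps $iJ_{\mu_0}$ away from the singularities of $\hS$), so it suffices to arrange $\hat\chi(\Sigma(y_0,\mu_0))\neq 0$. Starting from any real even nonnegative bump $\chi_0\in\cutoff_\R(0,1/2)$ with $\int\chi_0>0$ and rescaling via $\chi(s)=\lambda\chi_0(\lambda s)$, one has $\hat\chi(\zeta)=\hat\chi_0(\zeta/\lambda)\to\hat\chi_0(0)>0$ as $\lambda\to\infty$, delivering the nonvanishing while preserving $\supp\chi\subseteq[-1/2,1/2]$. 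The main technical point I expect is the uniformity of the stationary phase remainder in the $t$-dependent amplitude $a_{t,s,y,v}$; this is a standard feature of stationary phase with parameters and rests on the uniform $C^k$ bounds established in the first paragraph.
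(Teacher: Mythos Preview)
Your approach is the same as the paper's: implicit function theorem to parametrize the critical point $\mu^*(y,v)$, stationary phase in $\mu$ with the $t$-dependent amplitude treated uniformly via its $C^k$ bounds, then integration in $s$ producing the factor $\hat\chi(\Sigma(y,\mu^*))$, and finally a rescaling of $\chi$ to make this nonzero. One wrinkle: your reality argument for $A^{\chi,\theta}$ is not quite right, since $\sigma_0\equiv d_2\pmod 2$ only forces $e^{i\pi(\sigma_0-d_2)/4}\in\{\pm1,\pm i\}$, not $\{\pm 1\}$; in the paper this issue disappears because the critical points supplied by Proposition~\ref{prp:nondegenerate} in fact have \emph{positive-definite} Hessian (that is what Section~\ref{section:positivity} actually proves), so $\sigma_0=d_2$ and the paper's factor $\isrdet(\nabla^2_\mu\Phi)$ is genuinely real and positive.
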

\begin{proof}
Since $\nabla^2_\mu \Phi(y_0,v_0,\mu_0)$ is nondegenerate and $\nabla_\mu \Phi(y_0,v_0,\mu_0) = 0$, by the implicit function theorem there exist neighborhoods $U_0 \subseteq \fst$ of $y_0$ and $V_0\subseteq \snd$ of $v_0$ such that there is a (unique) smooth function $\mu^c : U_0 \times V_0 \to \dusnd$ such that
\[
\mu^c(y_0,v_0) = \mu_0, \qquad \nabla_\mu \Phi(y,v,\mu^c(y,v)) = 0, \qquad \det \nabla^2_\mu \Phi(y,v,\mu^c(y,v)) \neq 0
\]
for all $(y,v) \in U_0 \times V_0$.

For all sufficiently small $\epsilon \in (0,1-|\mu_0|)$ and all sufficiently small compact neighborhoods $U \subseteq U_0$ of $y_0$ and $V \subseteq V_0$ of $v_0$, if $\theta \in \cutoff_\dusnd(\mu_0,\epsilon)$ and $I^\theta$ is defined as in Proposition \ref{prp:stationaryphase}, then the method of stationary phase \cite[Theorem 7.7.6]{hrmander_analysis_1990} yields
\[\begin{split}
I^\theta&(t,s,r,y,v) \\
&= (2\pi i/t)^{d_2/2} \, \isrdet(\nabla^2_\mu \Phi(y,v,\mu^c(y,v))) \, e^{it\Phi(y,v,\mu^c(y,v))} \theta(\mu^c(y,v)) \\
&\times B(sr,\mu^c(y,v)) \, \exp(-is \Sigma(y,\mu^c(y,v)))\, \exp(-ir R(s,r,y,\mu^c(y,v))) \\
&+ O(t^{-d_2/2 - 1})
\end{split}\]
for all $t \geq 1$, $|s|,|r| < \epsilon$, $y \in U, v \in V$.

Note now that
\[
\exp(-ir R(s,r,y,\mu^c(y,v))) = 1 + O(r)
\]
and
\[
B(\sigma,\mu^c(y,v)) = \srdet \hS\left(i J_{\mu^c(y,v)}\right) + O(\sigma).
\]
Consequently, for all $t > \epsilon^{-1}$, $|s| < \epsilon$, $(y,v) \in U \times V$,
\[\begin{split}
I^\theta&(t,s,t^{-1},y,v) \\
&= (2\pi i/t)^{d_2/2} \, \isrdet (\nabla^2_\mu \Phi(y,v,\mu^c(y,v))) \, e^{it\Phi(y,v,\mu^c(y,v))} \theta(\mu^c(y,v)) \\
&\times \srdet \hS\left(i J_{\mu^c(y,v)}\right) \, \exp(-is \Sigma(y,\mu^c(y,v))) + O(t^{-d_2/2 - 1}).
\end{split}\]
Therefore, by \eqref{eq:qformula}, if $\chi \in \cutoff_\R(0,\epsilon)$, then, for all $t > \epsilon^{-1}$ and $(y,v) \in U \times V$,
\[\begin{split}
t^{Q-d/2} &\Omega^{\chi,\theta}_t(2ty,t^2 v) \\
&=\frac{e^{i\pi d/4}}{(4\pi)^{d_1/2} (2\pi)^{d_2/2}}  e^{it\Phi(y,v,\mu^c(y,v))} \theta(\mu^c(y,v))  \, \srdet \hS\left(i J_{\mu^c(y,v)}\right)\\
&\times  \isrdet (\nabla^2_\mu \Phi(y,v,\mu^c(y,v))) \, \hat\chi(\Sigma(y,\mu^c(y,v))) + O(t^{-1}).
\end{split}\]
By compactness of $U$ and $V$, the last identity is trivial for $1 \leq t \leq \epsilon^{-1}$.
Therefore \eqref{eq:stationaryphase} holds for all $t \geq 1$, $y \in U$, $v \in V$, if we define $A^{\chi,\theta}$ and $\Psi$ by
\begin{align*}
\Psi(y,v) &= \Phi(y,v,\mu^c(y,v)),\\
A^{\chi,\theta}(y,v) &= (4\pi)^{-d_1/2} (2\pi)^{-d_2/2} \srdet \hS\left(i J_{\mu^c(y,v)}\right) \theta(\mu^c(y,v)) \\
&\times \isrdet(\nabla^2_\mu \Phi(y,v,\mu^c(y,v))) \, \hat\chi(\Sigma(y,\mu^c(y,v))).
\end{align*}
In particular
\[\begin{split}
A^{\chi,\theta}(y_0,v_0) &= (4\pi)^{-d_1/2} (2\pi)^{-d_2/2} \srdet \hS\left(i J_{\mu_0}\right) \theta(\mu_0) \\
&\times \isrdet(\nabla^2_\mu \Phi(y_0,v_0,\mu_0)) \, \hat\chi(|\hS(iJ_{\mu_0})y_0|^2).
\end{split}\]
The conclusion follows by choosing $\theta \in \cutoff_\dusnd(\mu_0,\epsilon)$ so that $\theta(\mu_0)\neq 0$, and $\chi \in \cutoff_\R(0,\epsilon)$ so that $\hat\chi$ is real-valued and $\hat\chi(|\hS(iJ_{\mu_0})y_0|^2) \neq 0$; the latter condition is easily satisfied by taking $\chi = \chi_0(\lambda^{-1} \, \cdot)$ for some even $\chi_0 \in \cutoff_\R(0,1)$ with $\hat\chi_0(0) \neq 0$ and $\lambda>0$ sufficiently small.
\end{proof}

\begin{thm}\label{thm:lowerbnd}
There exists $\chi \in \cutoff_\R(0,1/2)$ such that, for all $p \in [1,2]$, there exists $C_{p,\chi} > 0$ such that, for all $t \geq 1$, 
\begin{equation}\label{eq:normestimate}
\| m_t^\chi(\opL) \|_{p \to p} \geq C_{p,\chi} \, t^{d(1/p-1/2)}.
\end{equation}
\end{thm}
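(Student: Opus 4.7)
The plan is to apply Propositions \ref{prp:nondegenerate} and \ref{prp:stationaryphase} at a nondegenerate critical point $(y_0, v_0, \mu_0)$ to produce $\chi \in \cutoff_\R(0,1/2)$, $\theta \in \cutoff_\dusnd(0,1)$, neighborhoods $U \ni y_0$, $V \ni v_0$, and the asymptotic expansion \eqref{eq:stationaryphase}. After shrinking $U$, $V$ so that $|A^{\chi,\theta}| \geq c_0 > 0$ there, and taking $t$ larger than a threshold $t_0$ to absorb the $O(t^{-1})$ remainder, I obtain
\[
|\Omega_t^{\chi,\theta}(2ty, t^2 v)| \geq (c_0/2)\, t^{-(Q-d/2)}, \qquad (y,v) \in U \times V, \ t \geq t_0,
\]
with explicit leading phase $e^{i\pi d/4} e^{it\Psi(y,v)}$. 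Setting $A_t := m_t^\chi(\opL) \theta(t\vecU)$, whose convolution kernel is $K_t := \Omega_t^{\chi,\theta}$, my goal is to lower-bound $\|A_t\|_{L^p\to L^p}$ and then remove the factor $\theta(t\vecU)$ via a uniform operator-norm bound.

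To lower-bound $\|A_t\|_{L^p\to L^p}$, I test on the indicator $f_t := \mathbf{1}_F$ of the anisotropic box $F := \{(x,u) \in G \tc |x| \leq \eta, \ |u| \leq \eta t\}$, for a small constant $\eta > 0$ to be fixed below. This yields $\|f_t\|_q^q = |F| \sim \eta^{d_1 + d_2}\, t^{d_2}$ for every $q \in [1,\infty]$. The scale ``$\eta$ in $\fst$, $\eta t$ in $\snd$'' is designed to match the intrinsic anisotropy of $K_t$: a direct computation using the $2$-step group law $h^{-1}g = (g_x - h_x,\ g_u - h_u - \tfrac12[h_x, g_x])$ shows that for $h \in F$ and $g \in E_t := \{(2ty, t^2 v) \tc (y,v) \in U \times V\}$ (so $|g_x| \lesssim t$), the rescaled coordinates $(y_{h^{-1}g}, v_{h^{-1}g})$ differ from $(y_g, v_g)$ by only $O(\eta/t)$; crucially, the commutator contribution $\tfrac12[h_x, g_x]$ of size $\eta\cdot t$ is absorbed by the $t^{-2}$ rescaling in the central variable.

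Consequently, both the amplitude and the phase in \eqref{eq:stationaryphase} stay close to their values at $(y_g, v_g)$ as $h$ ranges over $F$: the amplitude varies by a factor $1 + O(\eta/t)$, and the phase $t\Psi$ varies by at most $O(\eta)$. Choosing $\eta$ small enough that the phase variation is bounded by, say, $\pi/10$, a real-part argument gives
\[
|A_t f_t(g)| = \Bigl|\int_F K_t(h^{-1}g)\,dh\Bigr| \geq c_1\, \|f_t\|_1\, t^{-(Q-d/2)}
\]
for every $g$ in a subset $E_t' \subseteq E_t$ of measure $\geq c_2\, t^Q$ (the boundary loss needed to ensure $(y_{h^{-1}g}, v_{h^{-1}g}) \in U \times V$ is negligible for large $t$). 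Raising to the $p$-th power, integrating, and dividing by $\|f_t\|_p \sim t^{d_2/p}$ yields
\[
\|A_t\|_{L^p\to L^p} \;\gtrsim\; t^{\,d_2(1-1/p)\,+\,Q/p\,-\,Q\,+\,d/2} \;=\; t^{d(1/p-1/2)},
\]
using $Q - d_2 = d$. Finally, since $\theta(t\vecU)$ acts as Euclidean convolution in the central variable against $t^{-d_2}\check\theta(\cdot/t)$ (with $\check\theta$ Schwartz), Young's inequality gives $\|\theta(t\vecU)\|_{L^p\to L^p} \leq \|\check\theta\|_{L^1(\snd)}$ uniformly in $t$, so $\|m_t^\chi(\opL)\|_{L^p\to L^p} \gtrsim \|A_t\|_{L^p\to L^p} \gtrsim t^{d(1/p-1/2)}$.

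The main obstacle is the phase-coherence step: the rapid oscillation $e^{it\Psi}$ of $K_t$ could a priori cause cancellation when smeared by $f_t$, and preventing this forces the precise anisotropic scaling of $F$. The $2$-step structure is essential in that the single commutator $[h_x, g_x]$ is tamed by the $t^{-2}$ rescaling of the central coordinate, allowing $\|f_t\|_1 / \|f_t\|_p$ to grow as $t^{d_2(1 - 1/p)}$ — exactly the factor needed to turn the ``raw'' kernel estimate $\|K_t|_{E_t}\|_{L^p} \sim t^{Q/p - Q + d/2}$ into the claimed lower bound $t^{d(1/p-1/2)}$.
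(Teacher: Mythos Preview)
Your argument is correct and follows essentially the same strategy as the paper: both obtain the pointwise lower bound on $\Omega_t^{\chi,\theta}$ from Proposition~\ref{prp:stationaryphase} and then test on a function concentrated at scale $O(1)$ in $\fst$ and $O(t)$ in $\snd$, controlling the phase $t\Psi$ over the convolution to prevent cancellation. The only notable difference in execution is that the paper takes the test function to be $\tilde\chi \otimes F_t$, where $F_t$ is the inverse Fourier transform of $\theta(t\cdot)$, so that $m_t^\chi(\opL)(\tilde\chi \otimes F_t) = (\tilde\chi \otimes \delta_0) * \Omega_t^{\chi,\theta}$; this tests $m_t^\chi(\opL)$ directly and so dispenses with your final step of bounding $\|\theta(t\vecU)\|_{p\to p}$.
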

\begin{proof}
Let $y_0 \in \fst$, $v_0 \in \snd$, $\mu_0 \in \dusnd$ be given by Proposition \ref{prp:nondegenerate}. Let neighborhoods $U \subseteq \fst$ of $y_0$, $V \subseteq \snd$ of $v_0$, $\chi \in \cutoff_\R(0,1/2)$, $\theta \in \cutoff_\dusnd(0,1)$, $\Psi \in C^\infty(U \times V)$ be given by Proposition~\ref{prp:stationaryphase}.

Note that $m_t^\chi(\opL) \neq 0$ for all $t\geq 1$, because $\chi \neq 0$; hence it is sufficient to prove the estimate \eqref{eq:normestimate} for $t$ large.

Set
\[
F_{t}(u) = (2\pi)^{-d_2} \int_\dusnd \theta(t\mu) \, e^{i \langle \mu, u \rangle} \,d\mu.
\]
Then $F_{t} = t^{-d_2} F_{1}(t^{-1} \cdot)$ and
\[
\Omega^{\chi,\theta}_t = (\delta_0 \otimes F_t) * K_t,
\]
where $K_t = \Kern_{m_t^\chi(\opL)}$ as before.

Choose $\tilde\chi \in \cutoff_\fst(0,c)$, where $c > 0$ is a small parameter to be fixed later. Define
\[
\tilde \Omega_t = (\tilde \chi \otimes \delta_0) * \Omega^{\chi,\theta}_t = (\tilde\chi \otimes F_t) * K_t = m^\chi_t(\opL) (\tilde\chi \otimes F_t).
\]

Note that
\[
\tilde \Omega_t(x,u) = \int_\fst \tilde \chi(x') \, \Omega^{\chi,\theta}_t(x-x',u+[x,x']/2) \,dx',
\]
i.e.,
\[
\tilde \Omega_t(2ty,t^2 v) = \int_\fst \tilde \chi(x') \, \Omega^{\chi,\theta}_t(2t(y-x'/2t),t^2(v+[y,x']/t)) \,dx'.
\]

We would like to apply \eqref{eq:stationaryphase} to estimate $\Omega^{\chi,\theta}_t$ in the above integral and get a lower bound for $|\tilde \Omega_t(2ty,t^2 v)|$ for all sufficiently large $t \geq 1$ and all $y,v$ ranging in sufficiently small neighborhoods of $y_0,v_0$. The problem is that the oscillation coming from the factor $e^{it\Psi}$ could produce cancellation by integrating in $x'$. On the other hand $|\nabla \Psi(y,v)| \lesssim 1$ when $y,v$ range in compact sets. Consequently
\[
e^{it\Psi(y-x'/2t,v+[y,x']/t)} = e^{it\Psi(y,v) + ic \, O(1)}
\]
for all $x' \in \supp \tilde\chi$. By taking $c$ sufficiently small, one obtains that there cannot be too much cancellation (the integrand remains in a convex cone in the complex plane whose aperture is independent of $t$). So from \eqref{eq:stationaryphase} we conclude that there exist sufficiently small neighborhoods $U \subseteq \fst$ of $y_0$ and $V \subseteq \snd$ of $v_0$ such that, for all sufficiently large $t \geq 1$, $y \in U$ and $v \in V$,
\[
|\tilde \Omega_t(2ty,t^2 v)| \gtrsim t^{d/2-Q}.
\]
In particular
\[
\|\tilde \Omega_t\|_p \sim t^{Q/p} \left(\int_G |\tilde \Omega_t(2ty,t^2 v)|^p \,dy \,dv\right)^{1/p} \gtrsim t^{d/2-Q/p'} 
\]
where $p' = p/(p-1)$, while
\[
\| \tilde\chi \otimes F_{t} \|_p \sim t^{-d_2/p'}.
\]
Consequently
\[
\| m^{\chi}_t(\opL) \|_{p \to p} \geq \frac{\|\tilde \Omega_t\|_p}{\| \tilde\chi \otimes F_{t} \|_p} \gtrsim t^{d(1/2-1/p')} = t^{d(1/p-1/2)}
\]
and we are done.
\end{proof}

\begin{cor}
$\thr_-(\opL) \geq d/2$.
\end{cor}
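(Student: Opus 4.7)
The plan is to combine the lower bound for $\|m_t^\chi(\opL)\|_{p\to p}$ from Theorem \ref{thm:lowerbnd} with the upper bound \eqref{eq:sloc} on the Sobolev norm $\|m_t^\chi\|_{L^\infty_{s,\sloc}}$, and deduce the claim by comparing growth rates in $t$ as $p \to 1^+$.

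More precisely, arguing by contradiction, suppose $\thr_-(\opL) < d/2$. Then by the definition \eqref{eq:mhlp} of $\thr_-(\opL)$, there exists $s \in [0,d/2)$ such that, for every $p \in (1,\infty)$, one has a constant $C_{p,s} \in (0,\infty)$ with
\[
\| F(\opL) \|_{L^p \to L^p} \leq C_{p,s} \, \|F\|_{L^\infty_{s,\sloc}}
\]
for all bounded Borel $F$. Let $\chi \in \cutoff_\R(0,1/2)$ be the cutoff provided by Theorem \ref{thm:lowerbnd}, and apply the above bound to $F = m_t^\chi$. By \eqref{eq:sloc} we obtain
\[
\| m_t^\chi(\opL) \|_{L^p \to L^p} \leq C_{p,s} \, C_{s,\chi} \, (1+t)^s,
\]
while Theorem \ref{thm:lowerbnd} gives the opposite inequality
\[
\| m_t^\chi(\opL) \|_{L^p \to L^p} \geq C_{p,\chi} \, t^{d(1/p-1/2)}
\]
for every $p \in (1,2]$ and every $t \geq 1$. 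Comparing these as $t \to \infty$ forces the exponent inequality $d(1/p-1/2) \leq s$ for all $p \in (1,2]$, and taking $p \to 1^+$ yields $d/2 \leq s$, contradicting the choice of $s$.

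There is essentially no obstacle here: Theorem \ref{thm:lowerbnd} is the substantive content, while the present corollary is a bookkeeping step that converts $L^p$-operator-norm bounds on a one-parameter family of oscillatory multipliers into a statement about the abstract threshold $\thr_-(\opL)$. The only point to be mindful of is that \eqref{eq:mhlp} quantifies over $p \in (1,\infty)$, so one must ensure that the constant in Theorem \ref{thm:lowerbnd} does not blow up as $p \to 1^+$ in a way that defeats the comparison; but since for each \emph{fixed} $p \in (1,2]$ the constant $C_{p,\chi}$ is simply a positive number and one is comparing growth in $t$, the conclusion follows from an arbitrary fixed $p$ arbitrarily close to $1$.
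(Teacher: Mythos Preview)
Your proof is correct and is precisely the comparison of \eqref{eq:sloc} and \eqref{eq:normestimate} that the paper invokes; you have simply spelled out the details of that one-line argument. The final paragraph about constants possibly blowing up as $p\to 1^+$ is a non-issue (as you yourself note), since the comparison is in the growth exponent in $t$ at each fixed $p$.
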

\begin{proof}
This follows by comparison of \eqref{eq:sloc} and \eqref{eq:normestimate}.
\end{proof}

\section{Nondegenerate critical points of the phase function}\label{section:positivity}

This section is devoted to the proof of Proposition \ref{prp:nondegenerate}.

Recall that $\hT$ is defined by \eqref{eq:TSdef}. Hence
\[
\hT(z) = 1 - \sum_{k>0} b_k z^{2k},
\]
where, for all nonzero $k \in \N$,
\begin{equation}\label{eq:bernoulli}
b_k = (-1)^{k-1} 2^{2k} B_{2k} / (2k)! = 2 \pi^{-2k} \zeta(2k),
\end{equation}
the $B_{2k}$ are Bernoulli numbers, and $\zeta$ is the Riemann zeta function (see, e.g., \cite[proof of Theorem 1.2.4]{andrews_special_1999}).
So
\begin{equation}\label{eq:powerseries}
\langle \hT(iJ_\mu) y,y \rangle = |y|^2 - \sum_{k>0} b_k |J_\mu^k y|^2.
\end{equation}
Consequently, if $\Phi$ is defined as in Proposition \ref{prp:oscillatingformula}, then
\[
\Phi(y,v,\mu) = -|y|^2 + \Phi_0(y,\mu) + \langle v,\mu \rangle,
\]
where
\begin{equation}\label{eq:phaseseries}
\Phi_0(y,\mu) = \sum_{k>0} b_k |J_\mu^k y|^2,
\end{equation}
and moreover
\[
\nabla_\mu \Phi(y,v,\mu) = \nabla_\mu \Phi_0(y,\mu) + v, \qquad \nabla^2_\mu \Phi(y,v,\mu) = \nabla^2_\mu \Phi_0(y,\mu).
\]
In particular, the proof of Proposition \ref{prp:nondegenerate} is reduced to showing that there exist $y_0 \in \fst$ and $\mu_0 \in \dusnd$ such that $|\mu_0|<1$ and $\nabla^2_\mu\Phi_0(y_0,\mu_0)$ is nondegenerate, because then by choosing $v_0 = -\nabla_\mu \Phi_0(y_0,\mu_0)$ we have that \eqref{eq:nondegcritical} is satisfied.

Since the linear map $\mu \mapsto J_\mu$ is injective, $\dusnd$ can be identified with the subspace $V$ of $\lie{so}(\fst)$ given by
\[
V = \{ J_\mu \tc \mu \in \dusnd \}.
\]
So in the following we will consider $\Phi_0$ as a function $\fst \times V \to \R$.
Let $V_\gen$ be the homogeneous Zariski-open subset of $V$ whose elements have maximal number of distinct eigenvalues among the elements of $V$.

\begin{lem}\label{lem:genericelements}
Let $S \in V_\gen$ and let $e \in \fst$ be such that the orthogonal projection of $e$ on each eigenspace of $S^2$ is nonzero. Let $N$ be the number of distinct eigenvalues of $S^2$. For all $T \in V$, if
\begin{equation}\label{eq:vanishingassumption}
T S^j e = 0 \qquad\text{for $j =0,\dots,2N-1$,}
\end{equation}
then $T = 0$.
\end{lem}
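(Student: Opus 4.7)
My plan is to exploit the Zariski-openness of $V_\gen$ to show that, when $(S+tT)^2$ is restricted to a suitable invariant subspace, its characteristic polynomial is independent of $t$; extracting the $t^2$-coefficient of its trace will then force $T = 0$ by skew-symmetry. First I would reformulate the hypothesis in terms of the cyclic subspace $W \defeq \mathrm{span}\{S^j e : j \geq 0\}$. Writing $e = \sum_{i=1}^N e_i$ according to the spectral decomposition $\fst = \bigoplus_i E_i$ of $S^2$, each $e_i$ is nonzero by hypothesis. Since $S$ preserves each $E_i$ and $S^2|_{E_i} = \lambda_i I_{E_i}$, we have $W = \bigoplus_i \mathrm{span}\{e_i, S e_i\}$, and in particular $\dim W \leq 2N$. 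A Vandermonde argument using the pairwise distinctness of the $\lambda_i$ shows that $\{S^j e : 0 \leq j \leq 2N-1\}$ already spans $W$; hence the assumption $TS^j e = 0$ for $j = 0, \dots, 2N-1$ is equivalent to the cleaner statement $T|_W = 0$.

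Next, I would observe that $W$ is $S$-invariant by construction, and by skew-symmetry of $S$ so is $W^\perp$; the same holds for $T$, whose restriction to $W$ is zero. Thus $S + tT$ preserves the orthogonal decomposition $\fst = W \oplus W^\perp$ for every $t \in \R$, with $(S+tT)|_W = S|_W$ and $(S+tT)|_{W^\perp} = S|_{W^\perp} + t\,T|_{W^\perp}$. The spectrum of $(S+tT)^2$ is then the union (with multiplicities) of the spectra of the two block-diagonal pieces, and the $W$-piece contributes exactly $\{\lambda_1, \dots, \lambda_N\}$ for every $t$.

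The crucial step, which I expect to be the main obstacle, is the rigidity statement that the characteristic polynomial of $(S+tT)^2|_{W^\perp}$ is independent of $t$. Since $V_\gen$ is Zariski-open and contains $S$, for $t$ in some real neighborhood of $0$ one has $S + tT \in V_\gen$, so $(S+tT)^2$ has exactly $N$ distinct eigenvalues. As $\{\lambda_1, \dots, \lambda_N\}$ is already contributed by the $W$-block, the eigenvalues of $(S+tT)^2|_{W^\perp}$ must lie inside $\{\lambda_1, \dots, \lambda_N\}$; continuity of eigenvalues and the isolation of the distinct $\lambda_i$ then force each perturbed eigenvalue near $\lambda_i$ to equal $\lambda_i$ exactly. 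Hence the characteristic polynomial of $(S+tT)^2|_{W^\perp}$ is locally constant in $t$ near $0$, and being a polynomial function of $t$, it is constant on all of $\R$.

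It remains to convert this rigidity into the vanishing of $T|_{W^\perp}$. Taking traces in the expansion
\[
\bigl((S+tT)|_{W^\perp}\bigr)^2 = (S|_{W^\perp})^2 + t\,\bigl(S|_{W^\perp} T|_{W^\perp} + T|_{W^\perp} S|_{W^\perp}\bigr) + t^2\,(T|_{W^\perp})^2
\]
yields a polynomial of degree at most $2$ in $t$ which must be constant, and whose $t^2$-coefficient is $\mathrm{tr}\bigl((T|_{W^\perp})^2\bigr)$. For a real skew-symmetric operator one has $-\mathrm{tr}\bigl((T|_{W^\perp})^2\bigr) = \|T|_{W^\perp}\|_{\mathrm{HS}}^2$, so $T|_{W^\perp} = 0$; combined with $T|_W = 0$ this yields $T = 0$.
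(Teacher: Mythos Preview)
Your argument is correct and takes a genuinely different route from the paper's. The paper never decomposes $\fst$: it shows directly that $S_t^j e = S^j e$ for $j \le 2N$ (using the vanishing hypothesis), applies the minimal polynomial $q_t$ of $(S+tT)^2$ to $e$ to deduce $q_t(S^2)e = 0$, and then uses the projections $P_\lambda e \neq 0$ to conclude $q_t = q_0$ for all $t$; expanding $q_0((S+tT)^2)=0$ in powers of $t$ gives $T^{2N}=0$, hence $T=0$ by skew-symmetry. Your approach instead packages the hypothesis as $T|_W = 0$, block-diagonalises $S+tT$ with respect to $W \oplus W^\perp$, and pins down the characteristic polynomial of the $W^\perp$-block via the eigenvalue-counting constraint from $V_\gen$; the vanishing of $T$ then drops out of the $t^2$-coefficient of the trace. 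The paper's argument is slightly more economical (no invariant-subspace bookkeeping, and it uses only that elements of $V$ have at most $N$ distinct eigenvalues, not openness of $V_\gen$ per se), while yours is more geometric and makes explicit the cyclic subspace $W$ that underlies the hypothesis; your trace/Hilbert--Schmidt endgame is also a touch cleaner than the nilpotency argument.
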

\begin{proof}
For all $t\in \R$, let $S_t = S + t T$ and let $q_t$ be the minimal polynomial of $S_t^2$.
Since $S_t^2$ is a symmetric linear endomorphism, $q_t$ has no multiple roots and, by definition of $V_\gen$, the degree of $q_t$ is at most $N$.

From \eqref{eq:vanishingassumption} we easily obtain that
\[
S_t^j e = S^j e \qquad\text{for all $j=0,\dots,2N$.} 
\]
In particular
\[
q_t(S^2) e = q_t(S_t^2) e = 0. 
\]
For all eigenvalues $\lambda$ of $S^2$, if $P_\lambda$ is the corresponding spectral projection, then
\[
q_t(\lambda) P_\lambda e = q_t(S^2) P_\lambda e = P_\lambda q_t(S^2) e = 0,
\]
but $P_\lambda e \neq 0$ by assumption and consequently
\[
q_t(\lambda) = 0.
\]
This means that the roots of $q_t$ include all the roots of $q_0$. However $q_0$ has $N$ distinct roots and $q_t$ has degree at most $N$, therefore $q_t = q_0$. In particular
\[
q_0((S+tT)^2) = 0
\]
for all $t \in \R$. By expanding the left-hand side and considering the term that contains the highest power of $t$, one obtains that
\[
T^{2N} = 0
\]
and since $T$ is skew-symmetric this implies that $T = 0$.
\end{proof}

Fix $S \in V_\gen$ and $e \in \fst$ as in Lemma \ref{lem:genericelements}. For all $j \in \N$, define
\[
V_j = \{ T \in V \tc T S^l e = 0 \text{ for } l=0,\dots,j-1\}.
\]
Note that $V_0 = V$. Moreover $V_j \supseteq V_{j+1}$ and, by Lemma \ref{lem:genericelements}, $V_j = \{0\}$ for $j$ sufficiently large.
Let $r \in \N$ be minimal so that $V_{r} = 0$ (note that $r$ may be smaller than the value $2N$ given by Lemma \ref{lem:genericelements}, and in fact $r=1$ if $G$ is of Heisenberg type). Choose a linear complement $W_j$ of $V_{j+1}$ in $V_j$. So $V_j = W_j \oplus V_{j+1}$ and
\begin{equation}\label{eq:directsum}
V = W_0 \oplus \dots \oplus W_{r-1}.
\end{equation}
In addition, for all nonzero $T \in W_j$, $T S^l e = 0$ for $l < j$ but $T S^j e \neq 0$, and in particular the map $W_j \ni T \mapsto T S^j e \in \fst$ is injective.

Let $\Phi_{00}(\mu) = \Phi_0(e,\mu)$ and define, for all sufficiently small $\epsilon > 0$, the bilinear form $H(\epsilon) : V \times V \to \R$ by 
\[
H(\epsilon) = \frac{1}{2} \nabla^2 \Phi_{00}(\epsilon S).
\]
Let moreover $H_{ij}(\epsilon)$ be the restriction of $H(\epsilon)$ to $W_i \times W_j$ for all $i,j=0,\dots,r-1$. If we identify bilinear forms with their representing matrices, then we can think of $H_{ij}(\epsilon)$ as the $(i,j)$-block of $H(\epsilon)$ with respect to the decomposition \eqref{eq:directsum} of $V$. Note that $H(\epsilon)$ is an analytic function of $\epsilon$.

\begin{lem}\label{lem:blockasymptotics}
For all $i,j=0,\dots,r-1$, and all small $\epsilon \in \R$,
\begin{multline*}
H_{ij}(\epsilon) (A,B) \\ = \begin{cases}
O(\epsilon^{i+j+1}) & \text{if $i+j$ is odd,}\\
(-1)^{(i-j)/2} b_{1+(i+j)/2} \epsilon^{i+j} \langle A S^i e, B S^j e \rangle + O(\epsilon^{i+j+2}) &\text{if $i+j$ is even.}
\end{cases}
\end{multline*}
\end{lem}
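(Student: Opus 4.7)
The plan is to derive a convergent power-series expansion of $H(\epsilon)$ in $\epsilon$ and to identify the leading nonzero contribution by exploiting the annihilation properties that define $W_i$ and $W_j$. Starting from \eqref{eq:phaseseries}, the skew-symmetry $J_\mu^* = -J_\mu$ gives $|J_\mu^k e|^2 = (-1)^k \langle e, J_\mu^{2k} e\rangle$, so identifying $T = J_\mu$ one may rewrite
\[
\Phi_{00}(T) = \sum_{k>0} (-1)^k b_k \, \langle e, T^{2k} e\rangle.
\]
Differentiating twice in directions $A, B \in V$ and evaluating at $T = \epsilon S$, the coefficient of $st$ in $(\epsilon S + sA + tB)^{2k}$ is $\epsilon^{2k-2}$ times a sum over $0 \le a < b \le 2k-1$ of the two words $S^a A S^{b-a-1} B S^{2k-1-b}$ and $S^a B S^{b-a-1} A S^{2k-1-b}$. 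This yields, for $A \in W_i$ and $B \in W_j$,
\[
H_{ij}(\epsilon)(A,B) = \frac{1}{2} \sum_{k>0} (-1)^k b_k \, \epsilon^{2k-2} \sum_{0 \le a < b \le 2k-1} \bigl(\langle e, S^a A S^{b-a-1} B S^{2k-1-b} e\rangle + \langle e, S^a B S^{b-a-1} A S^{2k-1-b} e\rangle\bigr),
\]
a series that converges for small $\epsilon$ (since $\sum_k b_k z^{2k}$ has radius $\pi$) and contains only even powers of $\epsilon$.

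Next, I would use that $A S^\ell e = 0$ for $\ell < i$ and $B S^\ell e = 0$ for $\ell < j$. The skew-symmetry identity $\langle e, S^a A X\rangle = (-1)^{a+1}\langle A S^a e, X\rangle$ forces the first summand to vanish unless $a \ge i$, while the factor $B S^{2k-1-b} e$ vanishes unless $2k-1-b \ge j$. Writing $(\alpha, \beta, \gamma) = (a, b-a-1, 2k-1-b)$, with $\alpha+\beta+\gamma = 2k-2$, the surviving triples must satisfy $\alpha \ge i$, $\gamma \ge j$, $\beta \ge 0$, hence $2k - 2 \ge i+j$. If $i+j$ is odd, then $2k-2$ (being even) is at least $i+j+1$, which gives the claimed bound $O(\epsilon^{i+j+1})$. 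If $i+j$ is even, the minimal contributing index is $k_0 = 1 + (i+j)/2$, and the unique admissible triple for the first summand is $(\alpha, \beta, \gamma) = (i, 0, j)$, corresponding to $(a,b) = (i, i+1)$; by the symmetric argument, the second summand contributes only through $(a,b) = (j, j+1)$.

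Finally, skew-symmetry yields
\[
\langle e, S^i A B S^j e\rangle = (-1)^{i+1}\langle A S^i e, B S^j e\rangle, \qquad \langle e, S^j B A S^i e\rangle = (-1)^{j+1}\langle A S^i e, B S^j e\rangle,
\]
so for $i \equiv j \pmod{2}$ their sum is $2(-1)^{i+1}\langle A S^i e, B S^j e\rangle$. Multiplying by the prefactor $\frac{1}{2}(-1)^{k_0} b_{k_0} \epsilon^{i+j}$ produces a coefficient $(-1)^{k_0 + i + 1} b_{k_0} = (-1)^{(i+j)/2 + i} b_{k_0}$, which coincides with $(-1)^{(i-j)/2} b_{1+(i+j)/2}$ because the exponents $(i+j)/2+i$ and $(i-j)/2$ differ by $i+j$, an even integer. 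The remaining terms of the series, indexed by $k \ge k_0+1$, contribute $\sum_{k \ge k_0+1} O(\epsilon^{2k-2}) = O(\epsilon^{i+j+2})$, completing the asymptotic expansion.

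The main (relatively modest) obstacle I foresee is the sign bookkeeping: the factors of $(-1)$ arising from skew-symmetry, from the alternating signs $(-1)^k$ in the rewritten series for $\Phi_{00}$, and from the combinatorial structure must combine to land exactly at $(-1)^{(i-j)/2}$. Once the annihilation conditions have pinned down the admissible triples, the rest is a routine computation.
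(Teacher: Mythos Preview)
Your proof is correct and essentially identical to the paper's: both expand $\Phi_{00}(T) = \sum_{k>0} (-1)^k b_k \langle e, T^{2k} e\rangle$, extract the bilinear part in $(A,B)$ of $(\epsilon S+A+B)^{2k}$, and use the annihilation conditions defining $W_i,W_j$ to isolate the minimal index $k_0 = 1+(i+j)/2$ and the sign $(-1)^{k_0+i+1}=(-1)^{(i-j)/2}$. The only cosmetic difference is that the paper folds the $AB$- and $BA$-ordered words into a single sum $2\sum_{\alpha+\beta+\gamma=2k-2}\langle S^\alpha A S^\beta B S^\gamma e,e\rangle$ via the adjoint symmetry, whereas you keep the two orderings explicit.
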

\begin{proof}
Note that, by \eqref{eq:phaseseries},
\[
\nabla^2 \Phi_{00}(\epsilon S) = \sum_{k>0} b_k \nabla^2 \Phi_k(\epsilon S),
\]
where $\Phi_k(T) = |T^k e|^2$. Moreover the Hessian $\nabla^2 \Phi_k(\epsilon S)(A,B)$ is the bilinear part in $(A,B)$ of the Maclaurin expansion of $\Phi_k(\epsilon S + A + B)$ with respect to $(A,B)$. 

Let $k > 0$. In the expansion of $|(\epsilon S + A + B)^k e|^2$, the bilinear part in $(A,B)$ is
\begin{equation}\label{eq:bilinearpart}
2 (-1)^k \epsilon^{2k-2} \sum_{\alpha+\beta+\gamma = 2k-2}\langle S^\alpha A S^\beta B S^\gamma e, e \rangle.
\end{equation}
If we assume that $A \in W_i$ and $B \in W_j$, then the sum can be restricted to the indices $\alpha,\beta,\gamma$ such that $\alpha \geq i$ and $\gamma \geq j$, because the other summands vanish. In particular the entire sum vanishes unless $2k-2 \geq i+j$.

Hence, if $i+j$ is odd, then \eqref{eq:bilinearpart} vanishes unless $2k-2 \geq i+j+1$, and in particular \eqref{eq:bilinearpart} is $O(\epsilon^{i+j+1})$ for all $k$.

Suppose now that $i+j$ is even. If $2k-2 > i+j$, then $2k-2 \geq i+j+2$ and consequently \eqref{eq:bilinearpart} is $O(\epsilon^{i+j+2})$. If instead $2k-2=i+j$, then it must be $\alpha = i$, $\beta = 0$, and $\gamma = j$, thus \eqref{eq:bilinearpart} can be rewritten as
\[
2 (-1)^{k+i+1} \epsilon^{i+j} \langle B S^j e, A S^i e \rangle
\]
and we are done, because $k+i+1 \equiv (i-j)/2$ modulo $2$.
\end{proof}

The following result completes the proof of Proposition \ref{prp:nondegenerate}.

\begin{prp}
For all sufficiently small $\epsilon \neq 0$, $H(\epsilon)$ is positive definite.
\end{prp}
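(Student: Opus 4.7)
The plan is to rescale the bilinear form $H(\epsilon)$ blockwise so as to balance the different orders of $\epsilon$ given by Lemma \ref{lem:blockasymptotics}, identify the limit of the rescaled family as $\epsilon \to 0$, and verify that this limit is positive definite. For $\epsilon \neq 0$ I would introduce the invertible real linear map $D_\epsilon \colon V \to V$ acting on each $W_i$ as multiplication by $\epsilon^{-i}$, and consider $\tilde H(\epsilon) = H(\epsilon)(D_\epsilon \cdot, D_\epsilon \cdot)$, which has the same positive-definiteness status as $H(\epsilon)$. Lemma \ref{lem:blockasymptotics} then gives
\[
\tilde H_{ij}(\epsilon)(A,B) = \begin{cases} (-1)^{(i-j)/2} \, b_{1+(i+j)/2} \, \langle A S^i e, B S^j e \rangle + O(\epsilon^2) & \text{if } i+j \text{ is even,} \\ O(\epsilon) & \text{if } i+j \text{ is odd,} \end{cases}
\]
so $\tilde H(\epsilon)$ extends continuously across $\epsilon = 0$ to a symmetric form $H_0$ whose $(i,j)$-block vanishes unless $i+j$ is even. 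The problem thus reduces to showing that $H_0$ is positive definite, since by continuity and openness this will force $\tilde H(\epsilon)$ to be positive definite on a punctured neighborhood of $0$.

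Because the surviving blocks of $H_0$ require $i \equiv j \pmod 2$, the form $H_0$ splits as an orthogonal direct sum over $\bigoplus_k W_{2k}$ and $\bigoplus_k W_{2k+1}$. On each summand the oscillating sign $(-1)^{(i-j)/2}$ can be absorbed by the change of variables $A \mapsto (-1)^{\lfloor i/2 \rfloor} A$ on $W_i$, using $(-1)^{k-l}(-1)^k(-1)^l = 1$. Positive definiteness of $H_0$ thereby reduces to strict positivity of the two Hankel-type quadratic forms
\[
Q_{\mathrm{ev}}(A) = \sum_{k,l \geq 0} b_{k+l+1}\, \langle A_{2k} S^{2k} e, A_{2l} S^{2l} e \rangle, \qquad Q_{\mathrm{odd}}(A) = \sum_{k,l \geq 0} b_{k+l+2}\, \langle A_{2k+1} S^{2k+1} e, A_{2l+1} S^{2l+1} e \rangle,
\]
on $\bigoplus_k W_{2k}$ and $\bigoplus_k W_{2k+1}$ respectively; these are exactly the Hankel forms in Bernoulli numbers alluded to in the introduction.

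Strict positivity of these forms I would deduce from the zeta-function expansion $b_n = 2 \sum_{j \geq 1} \lambda_j^n$ with $\lambda_j = (\pi j)^{-2}$, which is the content of \eqref{eq:bernoulli}. Substituting and interchanging the (finite) sums over $k,l$ with the sum over $j$ yields
\[
Q_{\mathrm{ev}}(A) = 2 \sum_{j \geq 1} \lambda_j \Bigl| \sum_{k \geq 0} \lambda_j^k\, A_{2k} S^{2k} e \Bigr|^2, \qquad Q_{\mathrm{odd}}(A) = 2 \sum_{j \geq 1} \lambda_j^2 \Bigl| \sum_{k \geq 0} \lambda_j^k\, A_{2k+1} S^{2k+1} e \Bigr|^2,
\]
both manifestly nonnegative. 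For strict positivity when the input is nonzero, the key point is that the linear maps $W_i \ni A \mapsto A S^i e \in \fst$ are injective by the construction of $W_i$ as a complement of $V_{i+1}$ in $V_i$; hence if any $A_i$ is nonzero, the $\fst$-valued polynomial $x \mapsto \sum_k x^k A_{2k} S^{2k} e$ (respectively $\sum_k x^k A_{2k+1} S^{2k+1} e$) has a nonzero coefficient and thus only finitely many roots in $(0,\infty)$, while the $\lambda_j$ form an infinite sequence of distinct positive numbers, so at least one $j$ contributes a strictly positive summand. The main obstacle I expect is precisely this positivity step, where the specific arithmetic of the $b_n$ enters: the blockwise rescaling, parity splitting, and sign absorption are essentially bookkeeping, but passing from mere nonnegativity of a Hankel form in Bernoulli numbers to strict positivity, under the nontrivial constraints imposed by the $W_i$ decomposition, requires the representation of $b_n$ as an infinite sum of $n$-th powers derived from the Euler formula for $\zeta(2n)$.
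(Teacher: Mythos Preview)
Your argument is correct. The overall architecture --- blockwise rescaling by powers of $\epsilon$, reduction to the limit form at $\epsilon=0$, parity splitting, and absorption of the alternating sign $(-1)^{(i-j)/2}$ via $A\mapsto(-1)^{\lfloor i/2\rfloor}A$ --- is exactly what the paper does (the paper merely folds the sign into the rescaling map $M_\epsilon|_{W_j}=(-1)^{\lfloor j/2\rfloor}\epsilon^j\,\id_{W_j}$ from the outset).

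The genuine difference is in the final positivity step. The paper first uses the injections $W_i\hookrightarrow\fst$, $T\mapsto T S^i e$, to reduce the problem to positive definiteness of the scalar Hankel matrix $C=(c_{ij})$ with $c_{ij}=b_{1+(i+j)/2}$ for $i+j$ even, and then proves this by Sylvester's criterion, computing each principal minor explicitly via the zeta expansion and the Vandermonde identity. You instead keep the vector-valued form and use the same zeta expansion $b_n=2\sum_{j\geq1}\lambda_j^n$ to write $Q_{\mathrm{ev}}$ and $Q_{\mathrm{odd}}$ directly as nonnegative series of squares, then obtain strict positivity from the fact that the measure $2\sum_j\delta_{\lambda_j}$ has infinite support while the polynomial $x\mapsto\sum_k x^k A_{2k}S^{2k}e$ (resp.\ $\sum_k x^k A_{2k+1}S^{2k+1}e$) is nonzero by the injectivity of $T\mapsto T S^i e$. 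This is the moment-sequence viewpoint: it bypasses Sylvester and Vandermonde entirely and is a bit shorter, at the cost of not producing the explicit determinant formula for $\det Z_{m,s}$ that the paper records. Either route is perfectly adequate here.
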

\begin{proof}
Let $M_\epsilon : V \to V$ be the linear map defined by $M_\epsilon|_{W_j} = (-1)^{\lfloor j/2\rfloor} \epsilon^j \, \id_{W_j}$ for all $j=0,\dots,r-1$. Then, by Lemma \ref{lem:blockasymptotics}, we can write $H(\epsilon)$ as
\[
H(\epsilon)(A,B) = \tilde H(\epsilon)(M_\epsilon A, M_\epsilon B)
\]
for all $A,B \in V$, where
$\tilde H(\epsilon) : V \times V \to \R$ is a bilinear form whose restriction $\tilde H_{ij}(\epsilon)$ to $W_i \times W_j$ satisfies
\[
\tilde H_{ij}(\epsilon)(A,B) \\ = \begin{cases}
O(\epsilon) & \text{if $i+j$ is odd,}\\
b_{1+(i+j)/2} \langle A S^i e, B S^j e \rangle + O(\epsilon^2) &\text{if $i+j$ is even.}
\end{cases}
\]
In particular
\[
\tilde H_{ij}(0)(A,B) \\ = \begin{cases}
0 & \text{if $i+j$ is odd,}\\
b_{1+(i+j)/2} \langle A S^i e, B S^j e \rangle &\text{if $i+j$ is even.}
\end{cases}
\]
We are then reduced to showing that $\tilde H(0)$ is positive definite; in fact, if $\tilde H(0)$ is positive definite, then $\tilde H(\epsilon)$ is positive definite for all sufficiently small $\epsilon \neq 0$ and, for such $\epsilon \neq 0$, $H(\epsilon)$ is positive definite too, because $M_\epsilon$ is invertible.

Since the linear map
\[
V = W_0 \oplus \dots \oplus W_{r-1} \ni (T_0,\dots,T_{r-1}) \mapsto (T_j S^j e)_{j=0}^{r-1} \in \lie{g}_1^r
\]
is injective, we can consider $\tilde H(0)$ as the restriction to a suitable subspace of the bilinear form $K : \lie{g}_1^{r} \times \lie{g}_1^{r} \to \R$ given by
\[
K((v_0,\dots,v_{r-1}),(w_0,\dots,w_{r-1})) = \sum_{i,j=0}^{r-1} c_{ij} \, \langle v_i, w_j \rangle,
\]
where
\[
c_{ij} = \begin{cases}
0 &\text{if $i+j$ is odd,}\\
b_{1+(i+j)/2} &\text{if $i+j$ is even.}
\end{cases}
\]
So it is sufficient to show that $K$ is positive definite.

Let $e_1,\dots,e_{d_1}$ be an orthonormal basis of $\fst$. Then, in the basis
\[
(e_1,0,\dots,0),\dots,(0,\dots,0,e_1),\dots,(e_{d_1},0,\dots,0),\dots,(0,\dots,0,e_{d_1})
\]
of $\lie{g}_1^{r}$, the matrix of $K$ is a $d_1 \times d_1$ block diagonal matrix, whose diagonal blocks are all equal to the matrix $C = (c_{ij})_{i,j=0}^{r-1}$. In other words, the matrix of $K$ is of the form  $C\otimes I_{d_1}$. Hence $K$ is positive definite if and only if $C$ is positive definite.

Since $c_{ij} = 0$ when $i+j$ is odd, one can also consider $C$ as a $2 \times 2$ block diagonal matrix, where the first block is determined by the even rows/columns and the second block by the odd rows/columns. In order to show that $C$ is positive definite, it is sufficient to show the positive definiteness of each diagonal block.

In conclusion, by Sylvester's criterion, we are reduced to showing that matrices of the form
\[
Z_{m,s} = \begin{pmatrix}
b_{m+1} & b_{m+2} & \cdots & b_{m+s} \\
b_{m+2} & b_{m+3} & \cdots & b_{m+s+1} \\
\vdots & \vdots & \ddots & \vdots \\
b_{m+s} & b_{m+s+1} & \cdots & b_{m+2s-1} 
\end{pmatrix}
\]
have positive determinant for all $m,s$. Determinants involving Bernoulli numbers have been studied since long time and explicit formulas for some of them can be found in the literature (see, e.g., \cite{alsalam_determinants_1959,zhang_certain_2014}). However for us it is sufficient to show that the determinant of the above matrices is positive, which can be easily seen by means of properties of the Riemann zeta function $\zeta$.

In fact, from the identity $b_k = 2 \, \pi^{-2k} \zeta(2k)$, we obtain
\[
\det Z_{m,s} = \frac{2^{s}}{\pi^{2s(s+m)}} \det \begin{pmatrix}
\zeta(2m+2) & \zeta(2m+4) & \cdots & \zeta(2m+2s) \\
\zeta(2m+4) & \zeta(2m+6) & \cdots & \zeta(2m+2s+2) \\
\vdots & \vdots & \ddots & \vdots \\
\zeta(2m+2s) & \zeta(2m+2s+2) & \cdots & \zeta(2m+4s-2)
\end{pmatrix}.
\]
If $\mathfrak{S}_s$ denotes the permutation group of $\{1,\dots,s\}$ and $\psign(\sigma)$ denotes the sign of a permutation $\sigma \in \mathfrak{S}_s$, then the last determinant can be rewritten as
\[\begin{split}
&\sum_{\sigma \in \mathfrak{S}_s} \psign(\sigma) \prod_{i=1}^s \zeta(2(i+\sigma(i)+m-1)) \\
&=\frac{1}{s!} \sum_{\sigma,\tau \in \mathfrak{S}_s} \psign(\sigma) \, \psign(\tau) \prod_{i=1}^s \zeta(2(\sigma(i)+\tau(i)+m-1)) \\
&=\frac{1}{s!} \sum_{\sigma,\tau \in \mathfrak{S}_s} \psign(\sigma) \, \psign(\tau) \sum_{k_1=1}^\infty \dots \sum_{k_s=1}^\infty k_1^{-2(\sigma(1)+\tau(1)+m-1)} \cdots k_s^{-2(\sigma(s)+\tau(s)+m-1)} \\
&=\frac{1}{s!} \sum_{k_1=1}^\infty \dots \sum_{k_s=1}^\infty (k_1 \cdots k_s)^{-2(2s+m-1)} \left(\sum_{\sigma \in \mathfrak{S}_s} \psign(\sigma) \, k_1^{2(s-\sigma(1))} \cdots k_s^{2(s-\sigma(s))} \right)^2 \\
&=\frac{1}{s!} \sum_{k_1=1}^\infty \dots \sum_{k_s=1}^\infty (k_1 \cdots k_s)^{-2(2s+m-1)} \prod_{1 \leq i < j \leq s} (k_i^2 -k_j^2)^2,
\end{split}\]
where in the last passage the Vandermonde determinant formula was used. In particular
\[
\det Z_{m,s} = \frac{2^{s}}{s! \, \pi^{2s(s+m)}} \sum_{k_1=1}^\infty \dots \sum_{k_s=1}^\infty (k_1 \cdots k_s)^{-2(2s+m-1)} \prod_{1 \leq i < j \leq s} (k_i^2 -k_j^2)^2 > 0,
\]
and we are done.
\end{proof}

\section{Improvement of the sufficient condition}\label{section:improvement}

We now demonstrate how some estimates obtained in \cite{martini_further}, combined with elementary estimates for multivariate algebraic functions, can be used to obtain an improvement to Theorem \ref{thm:cmm} for all $2$-step groups.

Since the skew-symmetric endomorphism $J_\mu$ defined by \eqref{eq:endomorphism} depends linearly on $\mu$, we can write a spectral decomposition of $\sqrt{-J_\mu^2}$ where eigenvalues and spectral projections are algebraic functions of $\mu$. More precisely, as discussed in \cite[\S 2]{martini_further}, there exist nonzero $M,r_1,\dots,r_M \in \N$ and a nonempty Zariski-open homogeneous subset $\ddsnd \subseteq \dusnd$ such that, for all $\mu \in \ddsnd$, we can write
\begin{equation}\label{eq:Jspectral}
\sqrt{-J_\mu^2} = \sum_{j=1}^M b_j^\mu P_j^\mu
\end{equation}
for distinct $b_1^\mu,\dots,b_M^\mu \in (0,\infty)$ and mutually orthogonal projections $P_1^\mu,\dots,P_M^\mu$ of rank $2r_1,\dots,2r_M$, which are algebraic functions of $\mu$ and are real-analytic for $\mu \in \ddsnd$. Let moreover $P_0^\mu = I - (P_1^\mu+\dots+P_M^\mu)$ be the projection onto $\ker J_\mu$ for all $\mu \in \ddsnd$ and $r_0$ be its rank.

In terms of the eigenvalues $b_j^\mu$ and projections $P_j^\mu$ it is possible to write a fairly explicit formula for the Euclidean Fourier transform $\widehat \Kern_{F(\opL)}$ of the convolution kernel of the operator $F(\opL)$, for all $F \in C^\infty_c(\R)$. Namely, for all $\xi\in \fst$ and $\mu \in \ddsnd$,
\begin{equation}\label{eq:kernelformula}
\widehat \Kern_{F(\opL)}(\xi,\mu) = \sum_{n \in \N^M} F\left(\sum_{j=1}^M (2n_j+r_j)b_j^\mu + |P_0^\mu \xi|^2\right) \prod_{j=1}^M \ell_{n_j}^{(r_j-1)}(|P_j^\mu \xi|^2/b_j^\mu),
\end{equation}
where $\ell_m^{(k)}(t) = 2^{k+1} (-1)^m e^{-t} L_m^{(k)}(2t)$ and $L_m^{(k)}$ is the $m$th Laguerre polynomial of type $k$, for all $t \in \R$ and $m,k \in \N$ (cf.\ \cite[Proposition 6]{martini_further}). As shown in \cite[\S 4]{martini_further}, by means of this formula it is possible to estimate $\mu$-derivatives of $\widehat\Kern_{F(\opL)}(\xi,\mu)$ in terms of expressions analogous to the right-hand side of \eqref{eq:kernelformula}, but involving derivatives of $F$, provided that suitable estimates for $\mu$-derivatives of the $b_j^\mu$ and $P_j^\mu$ hold.

For the reader's convenience, we now state as a lemma a particular case of \cite[Corollary 19]{martini_further}, that will be sufficient for our purpose. For technical reasons, the estimate stated below involves a cutoff in the variable $\mu$.

As in \S \ref{section:stationaryphase} above, fix orthonormal coordinates $(u_1,\dots,u_{d_2})$ on $\snd$ and dual coordinates $(\mu_1,\dots,\mu_{d_2})$ on $\dusnd$, and let $\vecU$ be the corresponding vector of central derivatives on $G$. 
Moreover set $\langle t \rangle = 1+|t|$ for all $t \in \R$.

\begin{lem}\label{lem:derivative_estimate}
Let $D$ be a smooth vector field on $\ddsnd$, thought of as a first-order differential operator in the variable $\mu \in \ddsnd$. 
Suppose that there exists
$\kappa \in (0,\infty)$ such that
\begin{gather*}
|D b_1^\mu| \leq \kappa b_1^\mu, \dots, |D b_M^\mu| \leq \kappa b_M^\mu, \\
\|D P_0^\mu\| \leq \kappa,\dots,\|D P_M^\mu\| \leq \kappa
\end{gather*}
 for all $\mu \in \ddsnd$. 
Then, for all $F \in C^\infty_c(\R)$, all $\chi \in C^\infty_c(\ddsnd)$, 
and all $\alpha \in \{0,1\}$,
\begin{multline*}
\int_{\fst} \Bigl|D^\alpha \widehat\Kern_{F(\opL) \, \chi(\vecU)}(\xi,\mu) \Bigr|^2 \, d\xi 
\leq C_{\kappa,\alpha} \sum_{\iota \in I_{\alpha}} \int_{[0,\infty)} \int_{R_\iota} \sum_{n \in \N^M} |D^{k_\iota} \chi(\mu)|^2 \\
\times \left|F^{(\gamma_\iota)}\left(\sum_{j=1}^M (2(n_j+s_j)+r_j) b_j^\mu  + \eta\right)\right|^2 
\prod_{j=1}^M \left[ (b^\mu_j)^{1+a^\iota_j} \langle n_j \rangle^{a^\iota_j} \right] \,d\nu_{\iota}(s) \,d\sigma_\iota(\eta),
\end{multline*}
for all $\mu \in \ddsnd$, where $I_{\alpha}$ is a finite set and, for all $\iota \in I_{\alpha}$,
\begin{itemize}
\item $a^\iota,\beta^\iota \in \N^M$, $\gamma_\iota,k_\iota \in \N$, $\gamma_\iota+k_\iota \leq \alpha$,
\item $R_\iota = \prod_{j=1}^M [0,\beta^\iota_j]$ and $\nu_\iota$ is a Borel probability measure on $R_\iota$,
\item $\sigma_\iota$ is a regular Borel measure on $[0,\infty)$.
\end{itemize}
\end{lem}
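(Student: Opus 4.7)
The plan is to derive the estimate directly from the explicit kernel formula \eqref{eq:kernelformula}, exploiting the orthogonality of Laguerre polynomials to carry out the $\xi$-integration. Since multiplication by $\chi(\vecU)$ on $G$ corresponds on the Fourier side to multiplication by $\chi(\mu)$, we have
\[
\widehat\Kern_{F(\opL)\,\chi(\vecU)}(\xi,\mu) = \chi(\mu)\, \widehat\Kern_{F(\opL)}(\xi,\mu),
\]
and for $\alpha=1$ the Leibniz rule splits $D[\chi(\mu)\widehat\Kern_{F(\opL)}]$ into $(D\chi)(\mu)\widehat\Kern_{F(\opL)} + \chi(\mu)\,D\widehat\Kern_{F(\opL)}$. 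After squaring and integrating in $\xi$, the first summand contributes an index $\iota\in I_1$ with $k_\iota=1$, $\gamma_\iota=0$ (and is handled exactly as the $\alpha=0$ case but with $\chi$ replaced by $D\chi$), while the second contributes indices with $k_\iota=0$, $\gamma_\iota\leq 1$. Thus it suffices to handle the case where $D$ acts only on the kernel.

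For the $\alpha=0$ case, one decomposes $\fst = \ker J_\mu \oplus \bigoplus_{j=1}^M \mathrm{range}(P_j^\mu)$ and integrates over $\xi$ using this splitting. Changing variables $t_j = |P_j^\mu\xi|^2/b_j^\mu$ on each $2r_j$-dimensional summand produces a Jacobian proportional to $(b_j^\mu)^{r_j} t_j^{r_j-1}\,dt_j$ (after spherical reduction), so that the orthogonality relation
\[
\int_0^\infty \bigl|\ell_n^{(r_j-1)}(t)\bigr|^2\, t^{r_j-1}\, dt \;\asymp\; \langle n\rangle^{r_j-1}
\]
kills the cross terms in the $n$-sum and leaves a single sum over $n\in\N^M$. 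On $\ker J_\mu$ one simply applies Plancherel and records $\eta = |P_0^\mu\xi|^2$ as an auxiliary variable, which becomes the measure $d\sigma_\iota$ on $[0,\infty)$. The result is the announced estimate with $\gamma_\iota=k_\iota=0$, $\beta^\iota=0$, $a^\iota_j$ determined by the Laguerre normalisation constants, and $\nu_\iota$ a Dirac mass at the origin.

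For the term $\chi\cdot D\widehat\Kern_{F(\opL)}$ one differentiates \eqref{eq:kernelformula}. The operator $D$ can hit the argument $\sum_j(2n_j+r_j)b_j^\mu + |P_0^\mu\xi|^2$ of $F$ (producing $F'$ times a linear combination with coefficients $D b_j^\mu$ or $D P_0^\mu$), or it can hit one of the Laguerre factors (either through the argument $|P_j^\mu\xi|^2/b_j^\mu$, via $Db_j^\mu$ and $DP_j^\mu$). Using the assumed bounds $|Db_j^\mu|\leq\kappa b_j^\mu$ and $\|DP_j^\mu\|\leq\kappa$, each such derivative is controlled by $\kappa$ times an expression of the same shape as \eqref{eq:kernelformula}, but with two modifications: $F$ is replaced by $F'$, and the Laguerre factor acquires extra multiplicative powers of $|P_j^\mu\xi|^2/b_j^\mu$ or a shifted index, via the classical identities $\frac{d}{dt} L_n^{(k)}(t) = -L_{n-1}^{(k+1)}(t)$ and $tL_n^{(k+1)}(t) = (n+k+1)L_n^{(k)}(t) - (n+1)L_{n+1}^{(k)}(t)$. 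Squaring and integrating in $\xi$, the same Laguerre orthogonality as in the $\alpha=0$ case applies and yields the extra weight $(b_j^\mu)^{a^\iota_j}\langle n_j\rangle^{a^\iota_j}$, while the index shifts in the Laguerre factors translate, after reindexing the sum, into the shifts $n_j\mapsto n_j+s_j$ in the argument of $F^{(\gamma_\iota)}$, with $s_j$ ranging in a bounded interval $[0,\beta^\iota_j]$ according to a probability measure $\nu_\iota$.

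The main obstacle is the combinatorial bookkeeping of all terms produced by Leibniz and chain rule together with the Laguerre identities: one must verify that each resulting piece can be slotted into the stated template, with appropriate choices of $a^\iota$, $\beta^\iota$, $\gamma_\iota$, $k_\iota$, $\nu_\iota$, and $\sigma_\iota$. This is precisely the technical content of \cite[\S 4]{martini_further}, and the present lemma is obtained by specialising Corollary 19 there to the case $\alpha\leq 1$ and taking $\chi$ to be supported in $\ddsnd$ so that the spectral decomposition \eqref{eq:Jspectral} is valid throughout $\supp\chi$; no new ideas are required beyond this specialisation.
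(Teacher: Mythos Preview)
Your proposal is correct and aligns with the paper: the paper does not prove this lemma at all but merely states it as a particular case of \cite[Corollary 19]{martini_further}, which is exactly what you invoke in your final paragraph. Your additional sketch of the Laguerre-orthogonality mechanism behind that result is accurate and goes beyond what the paper itself supplies.
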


In view of the above lemma, we are interested in estimates for $\mu$-derivatives of $b_j^\mu$ and $P_j^\mu$. Indeed in \cite{martini_further} very precise estimates are obtained for particular classes of $2$-step groups, which eventually lead to proving that $\thr_+(\opL) \leq d/2$ in those cases. Here however a simpler estimate will be sufficient, that holds for all $2$-step groups and sub-Laplacians and comes from the very fact that the $b_j^\mu$ and $P_j^\mu$ are algebraic functions of $\mu$.

\begin{lem}\label{lem:hompoly}
There exists a nonzero homogeneous polynomial $H: \dusnd \to \R$ such that, for all $\mu \in \ddsnd$,
\begin{gather*}
|\partial_{\mu_k} b_j^\mu / b_j^\mu | \leq |\mu|^{h-1} |H(\mu)|^{-1}, \qquad j=1,\dots,M, \, k=1,\dots,d_2,\\
\|\partial_{\mu_k} P_j^\mu \| \leq |\mu|^{h-1} |H(\mu)|^{-1}, \qquad j=0,\dots,M, \, k=1,\dots,d_2,
\end{gather*}
where $h = \deg H$.
\end{lem}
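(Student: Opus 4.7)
The plan proceeds by two reductions: first, homogeneity reduces the desired bounds to corresponding bounds on the unit sphere $\{|\mu|=1\}\subset\dusnd$; second, the algebraicity of $b_j^\mu$ and $P_j^\mu$, combined with implicit differentiation, expresses the relevant derivatives as rational functions whose denominators are bounded below in absolute value by a fixed polynomial in $\mu$.

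For the homogeneity step, since $J_{t\mu}=tJ_\mu$, we have $b_j^{t\mu}=|t|\,b_j^\mu$ and $P_j^{t\mu}=P_j^\mu$ for all $t\neq 0$. Consequently both $\partial_{\mu_k}b_j^\mu/b_j^\mu$ and $\partial_{\mu_k}P_j^\mu$ are homogeneous of degree $-1$ in $\mu$; if $H$ is a homogeneous polynomial of degree $h$, the target $|\mu|^{h-1}|H(\mu)|^{-1}$ is also homogeneous of degree $-1$. It therefore suffices to prove each estimate on the unit sphere, with the $|\mu|^{h-1}$ factor restored by rescaling.

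For the estimate on $b_j^\mu$, consider the characteristic polynomial $p(\lambda,\mu)=\det(\lambda I+J_\mu^2)\in\R[\lambda,\mu]$, whose nonzero roots in $\lambda$ are precisely the $(b_j^\mu)^2$. Let $\tilde p(\lambda,\mu)\in\R[\lambda,\mu]$ be its squarefree part with respect to $\lambda$ (obtained by removing repeated factors and clearing denominators in $\R(\mu)[\lambda]$) and set $\Delta(\mu)=\mathrm{Res}_\lambda(\tilde p,\tilde p_\lambda)\in\R[\mu]$. Then $\Delta$ is a nonzero homogeneous polynomial whose zero set is exactly $\dusnd\setminus\ddsnd$. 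Differentiating $\tilde p((b_j^\mu)^2,\mu)=0$ yields
\[
\partial_{\mu_k}(b_j^\mu)^2 = -\frac{\tilde p_{\mu_k}((b_j^\mu)^2,\mu)}{\tilde p_\lambda((b_j^\mu)^2,\mu)};
\]
on the compact unit sphere, the numerator and the $b_j^\mu$ are uniformly bounded, while $\prod_{j=1}^M\tilde p_\lambda((b_j^\mu)^2,\mu)$ equals $\Delta(\mu)$ up to a nonzero factor polynomial in $\mu$. After absorbing the polynomially bounded factors coming from the other indices, each single $|\tilde p_\lambda((b_j^\mu)^2,\mu)|$ is bounded below by a constant multiple of a fixed power of $|\Delta(\mu)|$, giving the desired bound on $\partial_{\mu_k}b_j^\mu/b_j^\mu$ with $H$ taken to be a suitable power of $\Delta$.

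For the projections, I would exploit the Lagrange-interpolation identity
\[
P_j^\mu = \prod_{i\neq j}\frac{(b_i^\mu)^2 I+J_\mu^2}{(b_i^\mu)^2-(b_j^\mu)^2}\quad(j\geq 1),\qquad P_0^\mu=I-\sum_{j=1}^M P_j^\mu,
\]
valid on $\ddsnd$. Differentiating and inserting the already-established bounds on $\partial_{\mu_k}(b_j^\mu)^2$ shows that $\|\partial_{\mu_k}P_j^\mu\|$ is controlled by a rational expression in the $b_i^\mu$ and the entries of $J_\mu$ whose denominator, up to factors polynomially bounded on the unit sphere, is again a power of $\Delta(\mu)$. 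Choosing $H$ to be a single sufficiently large power of $\Delta$, dominating all compound denominators arising from both types of estimates, completes the proof. The main obstacle is bookkeeping: keeping track of the exact power of $\Delta$ needed and ensuring that one homogeneous polynomial $H$ works uniformly in $j$, $k$, and for both $b_j^\mu$ and $P_j^\mu$, in particular handling carefully the matrix-valued projections where several nested denominators appear.
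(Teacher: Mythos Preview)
Your approach is more explicit than the paper's: the authors simply observe that $\partial_{\mu_k} b_j^\mu/b_j^\mu$ and the entries of $\partial_{\mu_k} P_j^\mu$ are algebraic in $\mu$ and homogeneous of degree $-1$, then invoke a general lemma on algebraic functions (Lemma~4.2 of M\"uller--Ricci, \emph{Ann.\ of Math.}\ \textbf{143} (1996)) to produce $H$, arguing componentwise for the matrix-valued projections. Your construction of $H$ via the discriminant $\Delta$ of the squarefree characteristic polynomial is a natural concretisation of that black-box lemma, and the Lagrange interpolation formula for the projections is a clean way to make the $P_j$ case explicit.

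There is, however, a gap. Your implicit-differentiation step bounds $\partial_{\mu_k}(b_j^\mu)^2$, but the target quantity is $\partial_{\mu_k} b_j^\mu/b_j^\mu=\partial_{\mu_k}(b_j^\mu)^2/\bigl(2(b_j^\mu)^2\bigr)$, and the extra factor $(b_j^\mu)^{-2}$ is not always dominated by a power of $|\Delta|^{-1}$. Concretely, your claim that the zero set of $\Delta$ equals $\dusnd\setminus\ddsnd$ can fail when $r_0=0$: then $\lambda=0$ is not a root of $\tilde p$, so if some $b_j^\mu\to 0$ as $\mu\to\mu_*$ (i.e.\ the rank of $J_\mu$ drops) without any collision among the positive eigenvalues, one has $\Delta(\mu_*)\neq 0$ yet $\mu_*\notin\ddsnd$ and $|\partial_{\mu_k}b_j^\mu/b_j^\mu|\to\infty$ near $\mu_*$. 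Such groups exist (e.g.\ $d_1=4$, $d_2=2$, with $J_\mu=\mu_1 A_1+\mu_2 A_2$ where $A_1$ is invertible and $A_2$ has rank $2$). The repair is easy: multiply your $\Delta$ by the homogeneous polynomial detecting rank drop, for instance the coefficient of $\lambda^{r_0}$ in $\det(\lambda I+J_\mu^2)$, which equals $\pm\prod_j (b_j^\mu)^{4r_j}$ on $\ddsnd$; equivalently, work with the squarefree part of $\lambda\cdot p(\lambda,\mu)$ so that $0$ is always a root and $b_j\to 0$ becomes a genuine collision. With this amended $H$, the rest of your argument goes through.
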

\begin{proof}
Note that the expressions $\partial_{\mu_k} b_j^\mu / b_j^\mu$ and $\partial_{\mu_k} P_j^\mu$ are algebraic functions of $\mu$ and are homogeneous of degree $-1$. The conclusion follows by a simple adaptation of the proof of \cite[Lemma 4.2]{mller_solvability_1996}, taking into consideration the homogeneity (the $\partial_{\mu_k} P_j^\mu$ are matrix-valued functions, but one can argue componentwise).
\end{proof}

We can now combine the two lemmas above to obtain weighted $L^2$-estimates and $L^1$-estimates for $\Kern_{F(\opL)}$ and eventually prove that $\thr_+(\opL) < Q/2$.

\begin{prp}\label{prp:improv}
Let $H$ and $h$ be as in Lemma \ref{lem:hompoly} and set $h_0 = \max\{h,1\}$.
\begin{enumerate}[label=(\roman*),leftmargin=1.5em]
\item\label{en:improv_l2} For all compact sets $K \subseteq \R$, for all $\beta \geq 0$, for all $\alpha \in [0,(2h_0)^{-1})$, for all $s > \beta+\alpha$, if $F : \R \to \C$ is supported in $K$, then
\[
\int_G \Bigl| (1+|x|+|u|^{1/2})^\beta \, (1+|u|)^\alpha \, \Kern_{F(\opL)}(x,u) \Bigr|^2 \, dx \,du \leq C_{K,\alpha,\beta,s} \|F\|_{W_2^{s}}^2.
\]
\item\label{en:improv_l1} For all compact sets $K \subseteq \R$, for all $s > Q/2-(2h_0)^{-1}$, if $F : \R \to \C$ is supported in $K$, then
\[
\|\Kern_{F(\opL)}\|_1 \leq C_{K,s} \|F\|_{W_2^{s}}.
\]
\end{enumerate}
\end{prp}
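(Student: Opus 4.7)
The plan is to derive (ii) from (i) by a weighted Cauchy--Schwarz inequality, and to prove (i) by coupling Lemmas \ref{lem:derivative_estimate} and \ref{lem:hompoly} with a dyadic decomposition in $\mu$ adapted to the zero set of the polynomial $H$.

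For (i), I would start by noting that the support condition $\supp F \subseteq K$ together with \eqref{eq:kernelformula} forces only $\mu \in \ddsnd$ with $|\mu|$ bounded in terms of $K$ to contribute (since $r_j b_j^\mu \leq \sup|K|$ for every $j$ and $b_j^\mu$ controls $\|J_\mu\|$). On this compact $|\mu|$-range I would fix a smooth dyadic partition of unity $\{\chi_k\}_{k \geq 0}$ with $\chi_k$ supported where $|H(\mu)|/|\mu|^h \sim 2^{-k}$. By Lemma \ref{lem:hompoly}, on $\supp \chi_k$ the rescaled derivatives $|\mu|\partial_{\mu_j}$ satisfy the Lemma \ref{lem:derivative_estimate} hypotheses with $\kappa \sim 2^k$. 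Integrating the resulting pointwise-in-$\mu$ bound against $d\mu$ and applying Plancherel in both $\xi$ and $\mu$ converts it into a weighted $L^2$ expression in the spectral variable of $F$, which is dominated by $\|F\|_{W_2^s}^2$ via standard manipulations of the Laguerre expansion \eqref{eq:kernelformula}. The $(1+|u|)^\alpha$ weight is obtained by interpolating the cases $\alpha \in \{0,1\}$ of Lemma \ref{lem:derivative_estimate} via complex interpolation, while the $(1+|x|+|u|^{1/2})^\beta$ weight is absorbed by differentiating \eqref{eq:kernelformula} in $\xi$ (for the $|x|^\beta$ component) and $\mu$ (for the $|u|^{\beta/2}$ component), each derivative costing one power of $\lambda^{1/2}$ in the spectral variable.

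The decisive step is the summation in $k$: each $\mu$-derivative carries a factor of $2^k$, producing $2^{2\alpha k}$ in the squared $L^2$ norm, while a \L ojasiewicz-type estimate for the polynomial $H$ bounds the measure of $\supp \chi_k$ (intersected with the compact $|\mu|$-range) by $O(2^{-k/h})$. The resulting geometric series converges precisely when $2\alpha < 1/h$, that is $\alpha < (2h_0)^{-1}$, and this is the unique place where the structural hypothesis on $H$ enters. The main technical obstacle will be the uniform bookkeeping of Laguerre coefficients, $\xi$-derivatives, and fractional $\mu$-derivatives so that the sum over $n \in \N^M$ in Lemma \ref{lem:derivative_estimate} stays controlled and the power count closes with the threshold $s > \beta + \alpha$.

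For (ii), I would set $w(x,u) = (1+|x|+|u|^{1/2})^\beta(1+|u|)^\alpha$ and apply Cauchy--Schwarz:
\[
\|\Kern_{F(\opL)}\|_1 \leq \|w^{-1}\|_{L^2(G)} \, \|w\,\Kern_{F(\opL)}\|_{L^2(G)}.
\]
A direct integration (decisive in the regime $|u|^{1/2} \gtrsim |x|$, where $w \sim (1+|u|^{1/2})^\beta(1+|u|)^\alpha$), using $Q = d_1 + 2d_2$, shows $\|w^{-1}\|_{L^2(G)} < \infty$ iff $\beta + 2\alpha > Q/2$. Combined with the constraints $s > \beta + \alpha$ and $\alpha < (2h_0)^{-1}$ from (i), optimizing by taking $\alpha$ just below $(2h_0)^{-1}$ and $\beta$ just above $Q/2 - 2\alpha$ makes $\beta + \alpha$ arbitrarily close to $Q/2 - (2h_0)^{-1}$ from above, so any $s > Q/2 - (2h_0)^{-1}$ is admissible.
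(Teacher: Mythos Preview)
Your overall strategy matches the paper's: a dyadic decomposition in $\mu$ adapted to the size of $\tilde H(\mu)=|H(\mu)|/|\mu|^h$, combined with Lemma~\ref{lem:derivative_estimate}, yields the $|u|^\alpha$-weighted $L^2$ estimate for $\alpha<(2h_0)^{-1}$, and Cauchy--Schwarz with the weight $w$ then gives (ii) exactly as you describe. Two technical points deserve correction.

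First, the paper does not let $\kappa$ vary with the dyadic level. Instead it takes the vector field $D=|\mu|\,\tilde H(\mu)\,\partial_{\mu_k}$, for which Lemma~\ref{lem:hompoly} gives a \emph{uniform} bound $\kappa\lesssim 1$; the factor $(|\mu|\,\tilde H(\mu))^{-2\alpha}$ relating $D^\alpha$ to $\partial_{\mu_k}^\alpha$ then appears explicitly and is handled by a \emph{two-parameter} decomposition $\chi_{\rho,\delta}$ localising $|\mu|\sim\rho$ and $\tilde H(\mu)\sim\delta$. Summation in $\rho$ needs $\alpha<d_2/2$ (automatic), and summation in $\delta$ needs $\int_{S_{\dusnd}}\tilde H^{-2(\alpha+\epsilon)}<\infty$, which holds for $\alpha+\epsilon<(2h)^{-1}$ by a classical sublevel-set estimate for polynomials. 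Your level-dependent $\kappa\sim 2^k$ is morally equivalent, but it forces you to track the $\kappa$-dependence of $C_{\kappa,\alpha}$ in Lemma~\ref{lem:derivative_estimate}, which is not stated; and your single decomposition omits the $|\mu|^{-2\alpha}$ factor that must still be integrated near the origin. (Also, your claimed rate $O(2^{-k/h})$ for the sublevel sets is slightly too strong in general; what holds, and suffices, is $O(2^{-k\gamma})$ for every $\gamma<1/h$.)

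Second, your plan for the $(1+|x|+|u|^{1/2})^\beta$ weight---differentiating \eqref{eq:kernelformula} in $\xi$ and $\mu$---has a genuine gap. Lemma~\ref{lem:derivative_estimate} is stated only for order $\alpha\in\{0,1\}$, so iterated $\mu$-derivatives are unavailable, and $\xi$-derivatives of the Laguerre factors $\ell_{n_j}^{(r_j-1)}(|P_j^\mu\xi|^2/b_j^\mu)$ are not addressed by the lemma at all. The paper avoids this entirely: it first proves the pure estimate $\||u|^\alpha\Kern_{F(\opL)}\|_2\leq C_{K,\alpha}\|F\|_{W_2^\alpha}$ for $\alpha<(2h_0)^{-1}$, and then \emph{interpolates} with the classical Mauceri--Meda weighted Plancherel estimate (weight $(1+|x|+|u|^{1/2})^\beta$, regularity $\beta$) to obtain (i) with threshold $s>\alpha+\beta$. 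You should replace your direct-differentiation plan for the $\beta$ weight with this interpolation step.
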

\begin{proof}
Without loss of generality, we may assume that $F$ is smooth.

Set $\tilde H(\mu) = |\mu|^{-h} |H(\mu)|$.
Fix $k \in \{1,\dots,d_2\}$. Define the first-order differential operator $D$ on $\ddsnd$ by
\[
D = |\mu| \, \tilde H(\mu) \, \partial_{\mu_k}.
\]
By Lemma~\ref{lem:hompoly},
\[
|D^\alpha b_j^\mu| \lesssim b_j^\mu, \qquad \|D^\alpha P_j^\mu\| \lesssim 1
\]
for $\alpha=0,1$. Moreover, by homogeneity considerations,
\[
| D^\alpha |\mu| | \lesssim |\mu|, \qquad | D^\alpha (\tilde H(\mu)) | \lesssim \tilde H(\mu)
\]
for $\alpha=0,1$. In particular, if we choose nonnegative functions $\chi,\tilde\chi \in C^\infty_c((0,\infty))$ such that
\begin{equation}\label{eq:partition}
\sum_{k \in \Z} \chi(2^k \lambda) = 1
\end{equation}
for all $\lambda \in (0,\infty)$ and $\tilde\chi \chi = \chi$, and if we define, for all $\rho,\delta \in (0,\infty)$,
\[
\chi_{\rho,\delta}(\mu) = \chi(\rho^{-1} |\mu|) \, \chi(\delta^{-1} \tilde H(\mu)), \quad \tilde\chi_{\rho,\delta}(\mu) = \tilde\chi(\rho^{-1} |\mu|) \, \tilde\chi(\delta^{-1} \tilde H(\mu)),
\]
then
\[
|D^\alpha \chi_{\rho,\delta}|^2 \lesssim \tilde\chi_{\rho,\delta}
\]
for $\alpha=0,1$, uniformly in $\rho,\delta \in (0,\infty)$.

Therefore Lemma \ref{lem:derivative_estimate}
gives that, for all $\rho,\delta \in (0,\infty)$,
\begin{multline*}
\int_{\fst} \Bigl| D^\alpha \widehat\Kern_{F(\opL) \, \chi_{\rho,\delta}(\vecU)}(\xi,\mu) \Bigr|^2 \, d\xi 
\leq C_{\alpha} \sum_{\iota \in I_{\alpha}} \int_{[ 0,\infty )} \int_{R_\iota} \sum_{n \in \N^M} \tilde\chi_{\rho,\delta}(\mu) \\
\times \left|F^{(\gamma_\iota)}\left(\sum_{j=1}^M (2(n_j+s_j)+r_j) b_j^\mu  + \eta\right)\right|^2 
\prod_{j=1}^M \left[ (b^\mu_j)^{1+a^\iota_j} \langle n_j \rangle^{a^\iota_j} \right] \,d\nu_{\iota}(s) \,d\sigma_\iota(\eta),
\end{multline*}
for $\alpha=0,1$, where  $\widehat\Kern_{F(\opL) \, \chi_{\rho,\delta}(\vecU)}$ is the Euclidean Fourier transform of $\Kern_{F(\opL) \, \chi_{\rho,\delta}(\vecU)}$,  $I_{\alpha}$ is a finite set and, for all $\iota \in I_{\alpha}$,
\begin{itemize}
\item $a^\iota,\beta^\iota \in \N^M$, $\gamma_\iota \in \N$, $\gamma_\iota  \leq \alpha$,
\item $R_\iota = \prod_{j=1}^M [0,\beta^\iota_j]$ and $\nu_\iota$ is a Borel probability measure on $R_\iota$,
\item $\sigma_\iota$ is a regular Borel measure on $[0,\infty)$.
\end{itemize}
If we assume that $\supp F \subseteq K$ for some compact set $K \subseteq \R$, then in the above integral the quantities $b^\mu_j \langle n_j \rangle$ are bounded where the integrand does not vanish. The previous inequality and the Plancherel formula then yield
\begin{multline*}
\int_G \Bigl| u_k^\alpha \, \Kern_{F(\opL) \, \chi_{\rho,\delta}(\vecU)}(x,u) \Bigr|^2 \, dx \,du
\leq C_{K,\alpha}  \sum_{\iota \in I_{\alpha}} \int_{[ 0,\infty )} \int_{R_\iota} \sum_{n \in \N^M} \int_\dusnd \tilde\chi_{\rho,\delta}(\mu) \\
\times \left|F^{(\gamma_\iota)}\left(\sum_{j=1}^M (2(n_j+s_j)+r_j) b_j^\mu  + \eta\right)\right|^2 
(|\mu| \, \tilde H(\mu))^{-2\alpha} \,
\prod_{j=1}^M b^\mu_j  \,d\mu  \,d\nu_{\iota}(s) \,d\sigma_\iota(\eta).
\end{multline*}
Passing to polar coordinates in the inner integral in $\mu$ and rescaling gives
\begin{multline*}
\int_G \Bigl| u_k^\alpha \, \Kern_{F(\opL) \, \chi_{\rho,\delta}(\vecU)}(x,u) \Bigr|^2 \, dx \,du
\leq C_{K,\alpha} \, \rho^{M+d_2-2\alpha} \, \delta^{-2\alpha} \\
\times \sum_{\iota \in I_{\alpha}} \int_{[ 0,\infty )} \int_{R_\iota}  \int_{S_\dusnd} \int_0^\infty \sum_{n \in \N^M} \tilde\chi\left(\frac{\rho^{-1} \lambda}{\sum_{j=1}^M (2(n_j+s_j)+r_j) b_j^\omega}\right) \\
\times \left|F^{(\gamma_\iota)}\left(\lambda   + \eta\right)\right|^2  \,\tilde\chi(\delta^{-1} \tilde H(\omega)) 
\prod_{j=1}^M b^\omega_j  \,\frac{d\lambda}{\lambda} \,d\omega \,d\nu_{\iota}(s) \,d\sigma_\iota(\eta),
\end{multline*}
where $S_\dusnd$ is the unit sphere in $\dusnd$. In the above sum in $n$, the number of nonvanishing summands is bounded by a constant times $(\rho^{-1} \lambda)^M \prod_{j=1}^M (b_j^\omega)^{-1}$, hence
\begin{multline*}
\int_G \Bigl| u_k^\alpha \, \Kern_{F(\opL) \, \chi_{\rho,\delta}(\vecU)}(x,u) \Bigr|^2 \, dx \,du
\leq C_{K,\alpha} \, \rho^{d_2-2\alpha} \, \delta^{-2\alpha} \int_{S_\dusnd} \,\tilde\chi(\delta^{-1} \tilde H(\omega)) \,d\omega \\
\times \sum_{\iota \in I_{\alpha}} \int_{[ 0,\infty )}   \int_0^\infty  
 \left|F^{(\gamma_\iota)}\left(\lambda   + \eta\right)\right|^2  
\lambda^{M-1}\,d\lambda  \,d\sigma_\iota(\eta).
\end{multline*}
By using again the fact that $\supp F \subseteq K$, we finally obtain
\begin{multline}\label{eq:piece_estimate}
\int_G \Bigl| |u|^\alpha \, \Kern_{F(\opL) \, \chi_{\rho,\delta}(\vecU)}(x,u) \Bigr|^2 \, dx \,du \\
\leq C_{K,\alpha} \|F\|^2_{W_2^\alpha} \, \rho^{d_2-2\alpha} \, \delta^{-2\alpha} \int_{S_\dusnd} \,\tilde\chi(\delta^{-1} \tilde H(\omega)) \,d\omega
\end{multline}
for all $\rho,\delta \in (0,\infty)$ and for $\alpha=0,1$. Interpolation then gives the inequality \eqref{eq:piece_estimate} for all $\alpha \in [0,1]$.

Note now that, by \eqref{eq:partition},
\[
F(\opL) = \sum_{\substack{ m \in \Z \\ m \geq m_0}} \sum_{\substack{ l \in \Z \\ l\geq l_0}} F(\opL) \, \chi_{2^{-m},2^{-l}}(\vecU),
\]
where $m_0\in\Z$ depends on the compact set $K$ (cf.\ \cite[proof of Proposition 22]{martini_further}) and $l_0\in \Z$ on $\max_{S_{\dusnd}} \tilde H$. In particular, from \eqref{eq:piece_estimate} and Minkowski's inequality we obtain that, for all $\alpha < \min\{1,d_2/2\}$,
\[
\left(\int_G \Bigl| |u|^\alpha \, \Kern_{F(\opL)}(x,u) \Bigr|^2 \, dx \,du\right)^{1/2}
\leq C_{K,\alpha} \|F\|_{W_2^\alpha} \, \sum_{\substack{l \in \Z \\ l \geq l_0}} 2^{\alpha l} \left(\int_{S_\dusnd} \,\tilde\chi(2^{l} \tilde H(\omega)) \,d\omega\right)^{1/2}.
\]
On the other hand, for all $\epsilon > 0$, by the Cauchy--Schwarz inequality,
\[\begin{split}
\sum_{\substack{l \in \Z \\ l \geq l_0}} 2^{\alpha l}  \left(\int_{S_\dusnd} \,\tilde\chi(2^{l} \tilde H(\omega)) \,d\omega\right)^{1/2} 
&\leq C_{\epsilon} \left(\sum_{l \in \Z} 2^{2(\alpha+\epsilon) l}  \int_{S_\dusnd} \,\tilde\chi(2^{l} \tilde H(\omega)) \,d\omega \right)^{1/2}  \\
&\leq C_{\epsilon} \left( \int_{S_{\dusnd}} \tilde H(\omega)^{-2(\alpha+\epsilon)} \,d\omega \right)^{1/2}.
\end{split}\]
Since $H$ is a homogeneous polynomial of degree $h$, the last integral is finite when $\alpha+\epsilon < (2h)^{-1}$ (see, e.g., \cite[Lemma 2.1]{mller_twisted_1984}). Hence, for all $\alpha < (2h_0)^{-1}$,
\begin{equation}\label{eq:partialweighted}
\left(\int_G \Bigl| |u|^\alpha \, \Kern_{F(\opL)}(x,u) \Bigr|^2 \, dx \,du\right)^{1/2} \leq C_{K,\alpha} \|F\|_{W_2^\alpha}.
\end{equation}
Interpolation of \eqref{eq:partialweighted} with the standard estimate of \cite[Lemma 1.2]{mauceri_vectorvalued_1990} (see, e.g, the proof of \cite[Proposition 12]{martini_heisenbergreiter}) then gives \ref{en:improv_l2}, and \ref{en:improv_l1} follows by H\"older's inequality (see, e.g., the proof of \cite[Proposition 3]{martini_heisenbergreiter}).
\end{proof}

\begin{cor}
If $h_0$ is defined as in Proposition \ref{prp:improv}, then
\[
\thr_+(\opL) \leq Q/2 - 1/(2h_0) < Q/2.
\]
\end{cor}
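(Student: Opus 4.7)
The plan is to deduce the corollary directly from Proposition~\ref{prp:improv}\ref{en:improv_l1}, combined with the homogeneity of $\opL$ under the automorphic dilations $\delta_r(x,u) = (rx,r^2u)$ on $G$. In view of the definition of $\thr_+(\opL)$ in \eqref{eq:mhl1}, it suffices to prove that, for every $s > Q/2 - 1/(2h_0)$, there exists $C_s > 0$ with
\[
\sup_{t>0} \| F(t\opL) \|_{L^1 \to L^1} \leq C_s \, \|F\|_{L^2_s}
\]
for all bounded Borel functions $F$ supported in $[-1,1]$. Since $F(t\opL)$ is a left-invariant convolution operator, its $L^1\to L^1$ norm coincides with the $L^1$-norm of its convolution kernel, so the first step is to reduce the estimate to a bound on $\|\Kern_{F(t\opL)}\|_1$.

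The second step is the scaling. Since $\opL$ is homogeneous of degree $2$ with respect to $\delta_r$, the functional calculus gives
\[
\Kern_{F(t\opL)}(g) = t^{-Q/2} \, \Kern_{F(\opL)}(\delta_{t^{-1/2}} g)
\]
for every $t > 0$, and together with the identity $d(\delta_r g) = r^Q \, dg$ this yields
\[
\|\Kern_{F(t\opL)}\|_1 = \|\Kern_{F(\opL)}\|_1,
\]
which is independent of $t$. Taking $K = [-1,1]$ in Proposition~\ref{prp:improv}\ref{en:improv_l1}, one then obtains $\|\Kern_{F(\opL)}\|_1 \leq C_s \|F\|_{W_2^s}$ for every $s > Q/2 - 1/(2h_0)$, which combined with the previous identity delivers the required bound and shows $\thr_+(\opL) \leq Q/2 - 1/(2h_0)$.

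The strict inequality $\thr_+(\opL) < Q/2$ is then automatic, since by construction $h_0 = \max\{h,1\} \geq 1$, and therefore $1/(2h_0) > 0$. There is no serious obstacle: the argument is essentially a formal combination of Proposition~\ref{prp:improv}\ref{en:improv_l1} with the scale-invariance of $\|\Kern_{F(\opL)}\|_1$, and the only point requiring attention is the convention $h_0 \geq 1$ (which handles the degenerate case $h=0$, e.g., when $\dusnd$ is one-dimensional so that $H$ can be taken to be a nonzero constant).
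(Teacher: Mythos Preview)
Your proof is correct and follows essentially the same approach as the paper's: both combine Proposition~\ref{prp:improv}\ref{en:improv_l1} with the scale-invariance identity $\|\Kern_{F(t\opL)}\|_1 = \|\Kern_{F(\opL)}\|_1$, which comes from the homogeneity of $\opL$ under the dilations $\delta_r$. Your version simply spells out the scaling formula for the kernel and the reason for the strict inequality, which the paper leaves implicit.
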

\begin{proof}
This follows from Proposition \ref{prp:improv}\ref{en:improv_l1} and the fact that, by homogeneity of $\opL$,
\[
\|\Kern_{F(t\opL)}\|_1 = \|\Kern_{F(\opL)}\|_1
\]
for all $t > 0$.
\end{proof}

\end{document}